\numberwithin{equation}{section}
\newcommand{\I}{\mathrm{i}}
\newcommand{\wh}{\widehat}
\newcommand{\lb}{\left(}
\newcommand{\rb}{\right)}
\newcommand{\Beq}{\begin{equation}}
	\newcommand{\Eeq}{\end{equation}}
\newcommand{\beq}{\begin{equation*}}
	\newcommand{\eeq}{\end{equation*}}
\newcommand{\bal}{\begin{align}}
	\newcommand{\eal}{\end{align}}
\newcommand{\B}{\beta}
\newcommand{\bp}{\begin{prob}}
	\newcommand{\ep}{\end{prob}}
\newcommand{\bpr}{\begin{proof}}
	\newcommand{\epr}{\end{proof}}
\renewcommand{\o}{\omega}
\newcommand{\bel}[1]{\begin{equation}\label{#1}}
	\newcommand{\ee}{\end{equation}}
\newtheorem{theorem}{Theorem}[section]
\newtheorem{lemma}[theorem]{Lemma}
\theoremstyle{definition}
\newtheorem{remark}[theorem]{Remark}
\newcommand{\Rn}{\mathbb{R}^n}
\newcommand{\D}{\mathrm{d}}
\newcommand{\Lc}{\mathcal{L}}
\newcommand{\Rb}{\mathbb{R}}
\newcommand{\A}{\alpha}
\newcommand{\ve}{\varepsilon}
\newcommand{\Bb}{\mathbb{B}}
\newcommand{\Rc}{\mathcal{R}}
\newcommand{\Nb}{\mathbb{N}}
\newcommand{\Sb}{\mathbb{S}}
\newcommand{\Sn}{\mathbb{S}^{n-1}}
\renewcommand{\o}{\omega}
\title[Range characterization for SMT in even dimensions]{A  simple range characterization for spherical mean transform in even dimensions}
\author[Agrawal,~ Ambartsoumian, ~Krishnan,~ Singhal]{Divyansh Agrawal$^\ast$, Gaik Ambartsoumian$^\dagger$, Venkateswaran P.\ Krishnan$^\ast$ and Nisha Singhal$^\ast$}
\address {$^{\ast}$ Centre for Applicable Mathematics, Tata Institute of Fundamental Research, Bangalore, India
\newline
E-mail:{\tt\  agrawald@tifrbng.res.in, vkrishnan@tifrbng.res.in, nisha2020@tifrbng.res.in}
\newline
Orcid:{\tt\ 0009-0003-5125-0640, 0000-0002-3430-0920, 0009-0006-3005-1986}}
\address{$^\dagger$ Department of Mathematics, The University of Texas at Arlington, Texas, USA
\newline
E-mail:{\tt \ gambarts@uta.edu}
\newline
Orcid:{\tt\ 0000-0002-1462-9964}}
\date{\today}
\begin{document}

\begin{abstract}
The paper presents a new and simple range characterization for the spherical mean transform of functions supported in the unit ball in even dimensions. It complements the previous work of the same authors, where they solved an analogous problem in odd dimensions. The range description in even dimensions consists of symmetry relations, using a special kind of elliptic integrals involving the coefficients of the spherical harmonics expansion of the function in the range of the transform.  The article also introduces a pair of original identities involving normalized Bessel functions of the first and the second kind. The first result is an integral cross-product identity for Bessel functions of integer order, complementing a similar relation for Bessel functions of half-integer order obtained in the aforementioned work of the same authors. The second result is a new Nicholson-type identity. Both of these relations can be considered as important standalone results in the theory of special functions. Finally, as part of the proof of one of the theorems, the authors  derive an interesting equality involving elliptic integrals, which may be of independent interest.

\end{abstract} 
\subjclass[2020]{44A12, 44A15, 44A20, 45Q05, 33C10, 33C55, 33E05}
\keywords{Spherical mean transform; range characterization; Bessel functions; cross product identity; Nicholson-type identity; elliptic integrals}

\maketitle


\section{Introduction}

The spherical mean transform (SMT) maps a function of $n$ variables to its normalized integrals over all spheres in a given $n$-dimensional family. Some typical examples of such families include the spheres with centers restricted to a hypersurface, the spheres tangent to a hypersurface, and the spheres passing through a given point. 

The study of SMTs has a long history, going back to the classical works of Courant and Hilbert \cite{CH_Book}, and John \cite{John-book} on partial differential equations. Later, such transforms were investigated in relation to various problems arising in approximation theory, integral geometry, and tomography (e.g. see \cite{agranovsky1996approximation,  agranovsky1996injectivity, agranovsky1999conical, ref:AmbKuch, And, aramyan2020recovering, denisjuk1999integral, Fawcett} and the references therein). Of particular interest, motivated by imaging applications, has been the SMT integrating a function over spheres with centers restricted to the unit sphere. 
Numerous important results have been obtained about its inversion (e.g. see \cite{ ambartsoumian2014exterior, Finch-Haltmeir-Rakesh_even-inversion, Finch-P-R, K, nguyen2009family, Norton-circle, norton1981ultrasonic, R}), range description (\cite{Agranovsky-Finch-Kuchment-range, Agranovsky-Kuchment-Quinto, agranovsky2010range, AAKN1, AAKN2, ref:AmbKuch-range, finch2006range, Kuchmment_2025}), microlocal analysis and incomplete data problems (e.g. see \cite{ Ambartsoumian2018, Ambartsoumian-Zarrad-Lewis,  Ambartsoumian2015,  xu2004reconstructions}).

In this paper we derive a new range characterization for the SMT in the spherical geometry of data acquisition in even dimensions.  Typically, the range of a generalized Radon transform, like SMT, satisfies infinitely many independent conditions in standard function spaces. A complete description of the range is of significant importance for multiple reasons. In imaging applications, the transform data often corresponds to the measurements of the unknown (image) function, which may be incomplete, contaminated by noise, or corrupted by measurements errors. The knowledge of the range conditions can be helpful in filling in the missing data, suppressing the noise, and discovering hardware imperfections. In theoretical studies of the transform, the range description is often necessary in constructive proofs, when one would like to produce a function in the range with prescribed properties, e.g. specific restrictions on its support. 

The first complete range characterizations for the SMT were described in a pair of articles published in 2006. The authors of \cite{ref:AmbKuch-range} solved the problem in 2D. In addition to the moment-type conditions that are typical for generalized Radon transforms, their range description also included a set of less standard ``vanishing'' conditions. Namely, the Hankel transform of each (modified) $n$-th Fourier coefficient of the data function should vanish at every zero of the Bessel function $J_n$ away from the origin. They also showed that in the case of the standard Radon transform, one can formulate somewhat similar vanishing conditions that are equivalent to its moment-type conditions. However, it was not clear at the time whether in the case of SMT the vanishing conditions also imply its moment-type conditions.

The authors of \cite{finch2006range} gave a complete characterization of the SMT in arbitrary odd dimensions, using its connection to the solution of the backward initial-boundary value problem for the wave equation. This approach takes advantage of the Huygen’s principle for the wave equation in odd dimensions, and the fact that in odd dimensions the solution of the initial value problem for the wave equation is given by a local operator applied to the SMT. The range description presented in this paper included vanishing conditions similar to those obtained in \cite{ref:AmbKuch-range} (with the Hankel transform replaced by the cosine transform), but did not require any moment type conditions.

A pair of subsequent publications \cite{Agranovsky-Finch-Kuchment-range, Agranovsky-Kuchment-Quinto} generalized the results of \cite{ref:AmbKuch-range, finch2006range}. The authors of \cite{Agranovsky-Kuchment-Quinto} gave three different types of range descriptions in arbitrary dimensions, combining the moments conditions with (a) vanishing conditions on the Fourier-Hankel transform of the spherical harmonics, (b) an orthogonality condition for the solution of a related interior problem of the Darboux equation, (c) an orthogonality condition on the data function with the eigenfunction of the Dirichlet Laplacian in the unit ball. Later in \cite{Agranovsky-Finch-Kuchment-range},  it was shown that the moment conditions of the range descriptions in \cite{Agranovsky-Kuchment-Quinto} are redundant, since they are implied by the other conditions, and can be dropped in all dimensions.

Despite the great theoretical value of all these range characterizations, their complexity made them of limited practical use both in imaging applications and in constructive proofs of various analytical properties of the SMT. Consequently, the search continued for simpler necessary and sufficient conditions for a function to be in the range of that transform. 

In the case of odd dimensions, such conditions were described in our recent work \cite{AAKN1}, in the form of symmetry relations of certain differential operators acting on the coefficients of the spherical harmonics expansion of the data function. This novel range characterization has multiple advantages over the previous ones. First of all, its simplicity allows one to construct functions in the range of the SMT with various prescribed features, e.g. a given support. As a direct application of that idea, we showed in \cite{AAKN1} that the so-called unique continuation property does not hold for the SMT in odd dimensions. In a follow-up work \cite{AAKN2}, we used our range description to disprove a conjecture of Rubin \cite{R} relating the SMT, its backprojection operator, and the Riesz potential, as well as address the associated problem of describing the null space of the backprojection operator. From the point of view of imaging applications, our symmetry relations allow one to (synthetically) double the measured data in the radial variable. This can potentially lead to improved quality of reconstructed images compared to the techniques using radially incomplete data (e.g. see \cite{roy2015efficient}). Another interesting
result of \cite{AAKN1} was the discovery of remarkable integral cross product identities for the spherical Bessel functions of the first and second kind, which can be of significant value in the theory of special functions. They also shed light and clarify the vanishing conditions involving the Hankel transform that appeared in the earlier range characterizations. 

In general, the range description of \cite{AAKN1} allows one to generate a function $g(p,t)\in\Sb^{n-1}\times (0,2)$  in the range of the SMT by a ``symmetric continuation'' of an arbitrary smooth function $g(p,t)$ defined on one half of the domain, i.e. $t\in(0,1]$ or $t\in[1,2)$, as long as the continuation through $t=1$ is smooth. The latter requirement is relatively straightforward to verify in specific cases. For example, this condition is satisfied if the function defined on one half of the domain vanishes when $t$ is close to 1. A different range characterization of the SMT (or equivalently, of the free space wave operator)  using half-data was recently given in \cite{Kuchmment_2025}. That description is implicit and requires expanding into a series the exterior Radon transform of the solution of an exterior initial/boundary value problem for a wave equation.

The main thrust of the current work is the derivation of an even-dimensional analog of the range description obtained in \cite{AAKN1} in the form of symmetry relations. As one could expect from the properties of the SMT in even dimensions, the symmetry relations here are of non-local character, and are expressed in the form of integral equations. As part of our analysis, we deduce a new set of integral cross product identities for Bessel functions of integer order, complementing similar identities for Bessel functions of half-integer order obtained in the case of odd dimensions. These relations can be considered as important standalone results in the theory of special functions. They also illuminate the vanishing conditions involving the Hankel transform that appear in the earlier range characterizations of the SMT in even dimensions. In addition to that, we derive a Nicholson-type identity, expressing the cross product of Bessel functions of first and second kind in terms of an integral involving only the Bessel function of the first kind. Finally, as part of one of our proofs, we obtain a curious equality involving elliptic integrals, which may be of independent interest.

\vspace{2mm}

The rest of the paper is organized as follows. Section \ref{sec:main} contains the statements of the main results. In Section \ref{sec:prelims} we present formal definitions and recall some known results used in the article. The proofs of the main results are given in Section \ref{sec:proofs}.


\section{Main results}\label{sec:main}
We now state the main results of this paper. As with our previous work in odd dimensions \cite{AAKN1}, we first derive a range description for the radial case and then use it to prove the statement for the general case. These range characterizations are formulated as Theorems \ref{range} and \ref{Thm1.4} below.
\begin{theorem}[Range characterization - radial case]\label{range}
    Let $\Bb$ denote the unit ball in $\mathbb{R}^n$ for even $n \geq 2$ and $\A=\frac{n-2}{2}$. A function $g \in C_c^\infty ((0,2))$ is representable as $g = \Rc f$ for a radial function $f \in C_c^\infty(\Bb)$ if and only if $h(t) \coloneqq t^{n-2} g(t)$ satisfies for each $0<s<1$,
    	\Beq\label{RC-Radial}
    	\int\limits_{0}^{1-s} \frac{th(t)}{t^{2\A}} \Big{\{}\left[(1+t)^2-s^2\right]\left[(1-t)^2 - s^2\right]\Big{\}}^{\A- \frac{1}{2}} \D t = \int\limits_{1+s}^{2} \frac{th(t)}{t^{2\A}} \Big{\{}\left[(1+t)^2-s^2\right]\left[(1-t)^2 - s^2\right]\Big{\}}^{\A- \frac{1}{2}} \D t.
    	\Eeq    
    \end{theorem}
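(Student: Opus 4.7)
The plan is to handle necessity and sufficiency separately. Both directions are routed through the explicit integral representation of $\Rc f$ for radial $f$, obtained by parametrizing $\Sb^{n-1}$ by the zonal angle between the center $p$ and the direction of integration: for any radial $f(x)=F(|x|)\in C_c^\infty(\Bb)$,
\[
h(t) = t^{n-2}\,\Rc f(t) = c_n\int_{|1-t|}^{1+t} F(r)\,r\,\bigl[((1+t)^2-r^2)(r^2-(1-t)^2)\bigr]^{\A-1/2}\,\D r,
\]
with $c_n=|\Sb^{n-2}|/(2^{n-3}|\Sb^{n-1}|)$. Since $t^{2\A}=t^{n-2}$, the condition \eqref{RC-Radial} is equivalent to
\[
\int_{0}^{1-s} t\,g(t)\,K(s,t)\,\D t = \int_{1+s}^{2} t\,g(t)\,K(s,t)\,\D t,\quad K(s,t):=\bigl\{[(1+t)^2-s^2][(1-t)^2-s^2]\bigr\}^{\A-1/2}.
\]
The key algebraic observation I would exploit is the factorization $K(s,t)=\{[1-(t-s)^2][1-(t+s)^2]\}^{\A-1/2}$, which has precisely the same algebraic form as the forward kernel above with the roles of $r$ and $s$ swapped.

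For necessity, I would substitute the representation for $h$ into both sides of \eqref{RC-Radial} and apply Fubini (justified by the compact supports). Interchanging the orders of integration and then performing the substitutions $v=1-t$ on the left and $v=t-1$ on the right (both producing $v\in(s,r)$) reduces the required identity to showing that, for every $0<s<r<1$,
\[
\int_s^r [(v^2-s^2)(r^2-v^2)]^{\A-1/2}\,\bigl\{\Phi_-(v;r,s)-\Phi_+(v;r,s)\bigr\}\,\D v = 0,
\]
where
\[
\Phi_{\pm}(v;r,s) = (1\pm v)^{1-2\A}\,\bigl[((2\pm v)^2-s^2)((2\pm v)^2-r^2)\bigr]^{\A-1/2}.
\]
I expect this to be precisely the \emph{curious equality involving elliptic integrals} advertised in the abstract, which I would extract as a standalone lemma. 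Verifying it is the main technical obstacle of the proof; I anticipate it can be handled either by a contour-deformation argument on a suitable hyperelliptic Riemann surface with branch points at $\pm s,\pm r,\pm2\pm s,\pm2\pm r$, or by an explicit change of variable that exhibits the two competing integrands as two parts of a single closed-form antiderivative.

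For sufficiency, given $g\in C_c^\infty((0,2))$ satisfying \eqref{RC-Radial}, I would restrict to $t\in(0,1]$ and view the formula for $h(t)$ above as a Volterra equation for $F$. After the substitution $r=1-x$ the kernel factors as $(t-x)^{\A-1/2}$ times a smooth nonvanishing factor, so the equation is of Abel type and can be inverted by standard Erd\'elyi--Kober techniques; one obtains $F\in C_c^\infty((0,1))$ such that $f(x):=F(|x|)\in C_c^\infty(\Bb)$. Setting $\wt g:=\Rc f$, the already-established necessity direction gives that $\wt g$ also satisfies \eqref{RC-Radial}, while by construction $\wt g=g$ on $(0,1]$; hence $w:=g-\wt g$ vanishes on $(0,1]$ and satisfies \eqref{RC-Radial}. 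The left-hand side of \eqref{RC-Radial} for $w$ is then identically zero, leaving the homogeneous Volterra equation $\int_{1+s}^{2} t\,w(t)\,K(s,t)\,\D t=0$ for every $s\in(0,1)$. Substituting $\tau=t-1$ followed by $\sigma=s^2$, $\rho=\tau^2$ puts it in the Abel form $\int_\sigma^1 \wt w(\rho)\,(\rho-\sigma)^{\A-1/2}\,M(\sigma,\rho)\,\D\rho=0$ with $M$ smooth and positive, whose only continuous solution is $\wt w\equiv 0$. Consequently $w\equiv 0$ on $(1,2)$, so $g=\Rc f$ on all of $(0,2)$, finishing the argument.
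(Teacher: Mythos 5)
Your reduction of the necessity direction is set up correctly: substituting the radial forward formula, applying Fubini, and passing to $v=|1-t|$ does reduce \eqref{RC-Radial} to the two-parameter identity you display, which (after the further substitution $t\mapsto t^2$) is exactly the elliptic-integral equality the paper isolates as Theorem \ref{Elliptic_Integrals}. But at that point you stop: you say you ``expect'' the identity to hold and gesture at a contour deformation on a hyperelliptic surface or an unspecified closed-form antiderivative, without carrying either out. That identity is the entire technical content of necessity; the paper proves it by studying the quartic $P(t)-yt$ and showing that its roots satisfy $\sqrt{t_1}+\sqrt{t_4}=\sqrt{t_2}+\sqrt{t_3}$ (via the relation $a_3^2-4a_2+8\sqrt{a_0}=0$, which is where the special form $a=(1+u)^2,\dots,d=(1-u)^2$ enters), so that after the change of variables $y=P(t)/t$ the four branches cancel pairwise. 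As written, your necessity argument is a correct reduction to an unproved lemma.

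The sufficiency direction is a genuinely different route from the paper's (the paper proves the Nicholson-type formula of Theorem \ref{Nicholson-type formula}, deduces the cross-product identity of Theorem \ref{CP-Identity}, and then invokes the Agranovsky--Finch--Kuchment characterization, Theorem \ref{T2.4}), but it contains a gap at its central step: you assert that Abel/Erd\'elyi--Kober inversion of the Volterra equation on $t\in(0,1]$ produces $F\in C_c^\infty((0,1))$ with $f(x)=F(|x|)\in C_c^\infty(\Bb)$. The inversion does give smoothness of $F$ on the open interval and vanishing near $r=1$ (since $h$ vanishes near $t=0$), but it gives no control at $r=0$: there the kernel degenerates (the factor $(r+|1-t|)^{\A-1/2}$ and the weight $r$ degenerate at the corner $t\to 1$, $r\to 0$), and smoothness of $f$ at the origin corresponds to nontrivial structure of the data at $t=1$ (spheres through the center) --- in the variables $\rho=r^2$, $x=(1-t)^2$ the data enters as $g(1-\sqrt{x})$, which is not smooth in $x$ at $0$ for generic $g$. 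Since your construction never uses \eqref{RC-Radial} on the interior half, it would, if correct, show that every smooth half-data vanishing near $t=0$ is realizable by some radial $f\in C_c^\infty(\Bb)$, and that is precisely the kind of statement that requires justification (compare the compatibility at $t=1$ needed even in the odd-dimensional case). In addition, the injectivity of the exterior homogeneous equation for integer $\A\ge 1$ (kernel $(\rho-\sigma)^{\A-\frac12}$, a smoothing Volterra operator rather than a genuine Abel kernel) is asserted rather than argued, though that part is repairable by fractional differentiation; the realization of the interior data is the substantive missing step.
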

\begin{theorem}[Range characterization - general case]\label{Thm1.4}
    Let $\Bb$ denote the unit ball in $\mathbb{R}^n$ for an even $n \geq 2$. A function $g \in C_c^\infty (\Sb^{n-1}\times (0,2))$ is representable as $g = \Rc f$ for $f\in C_c^\infty(\Bb)$ if and only if  for each $(m,l), m\geq 0, 0\leq l\leq d_m$, $h_{m,l}(t)=t^{n-2}g_{m,l}(t)$ satisfies the following two conditions:
    \begin{itemize}
        \item there is a function $\phi_{m,l}\in C_c^{\infty}((0,2))$ such that 
        \[
            h_{m,l}(t)= D^{m} \phi_{m,l}(t), \mbox{ where } D=\frac{1}{t}\frac{\D}{\D t},
        \]
        \item $\phi_{m,l}(t)$ satisfies the following: For $0< s< 1$,
	\[
    \begin{aligned} 
	\int\limits_{0}^{1-s} &\frac{t\phi_{m,l}(t)}{t^{2(m+\A)}} \Big{\{}\left[(1+t)^2-s^2\right]\left[(1-t)^2 - s^2\right]\Big{\}}^{m+\A- \frac{1}{2}} \D t \\
    &= \int\limits_{1+s}^{2} \frac{t\phi_{m,l}(t)}{t^{2(m+\A)}} \Big{\{}\left[(1+t)^2-s^2\right]\left[(1-t)^2 - s^2\right]\Big{\}}^{m+\A- \frac{1}{2}} \D t.
    \end{aligned} 
	\]   
    \end{itemize}
\end{theorem}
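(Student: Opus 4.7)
The plan is to reduce the general range description to the radial one (Theorem~\ref{range}) via a spherical harmonic decomposition combined with the classical identification of each harmonic component of the SMT with a radial SMT in a higher even dimension. Expanding $f(r\omega) = \sum_{m,l} f_{m,l}(r)\, Y_{m,l}(\omega)$ and $g(p,t) = \sum_{m,l} g_{m,l}(t)\, Y_{m,l}(p)$, a Funk--Hecke-type computation represents $g_{m,l}$ as a one-dimensional integral of $f_{m,l}$ against a Gegenbauer-type kernel. Writing $f_{m,l}(r) = r^{m} F_{m,l}(r)$ recasts this integral as a radial spherical mean transform of $F_{m,l}$ viewed as a radial function on the unit ball in $\R^{n+2m}$: up to a normalizing constant one has $\phi_{m,l}(t) = t^{\,n+2m-2}\,\Rc_{n+2m} F_{m,l}(t)$, and the operator $D^m$ with $D = t^{-1} d/dt$ is precisely what converts this higher-dimensional radial datum back into $h_{m,l}(t) = t^{n-2} g_{m,l}(t)$. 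This identification, which I would formalize as a lemma from the preparatory computations in Section~\ref{sec:prelims}, yields the first bullet automatically and reduces the range problem for the $(m,l)$-th component to a radial range problem in dimension $n+2m$.

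Once the reduction is available, both directions are immediate. For necessity, given $f \in C_c^\infty(\Bb)$, each $F_{m,l}$ lies in $C_c^\infty(\Bb_{n+2m})$ as a radial function; since $n$ even implies $n+2m$ even, Theorem~\ref{range} applied in dimension $n+2m$ (so that $\A$ is replaced by $m+\A$) gives exactly the integral symmetry relation in the second bullet. For sufficiency, given a family $\{\phi_{m,l}\}\subset C_c^\infty((0,2))$ satisfying both conditions, Theorem~\ref{range} in dimension $n+2m$ produces a radial $F_{m,l} \in C_c^\infty(\Bb_{n+2m})$ whose radial SMT realizes $\phi_{m,l}$; setting $f_{m,l}(r) = r^m F_{m,l}(r)$ and reassembling $f = \sum_{m,l} f_{m,l}(r)\, Y_{m,l}(\omega)$ produces a candidate which, by the reduction, satisfies $\Rc f = g$.

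The main obstacle is the smoothness and compact support of the reassembled $f$ on $\Bb \subset \R^n$. The factor $r^{m}$ in $f_{m,l}(r) = r^m F_{m,l}(r)$ is what makes $f_{m,l}(r) Y_{m,l}(\omega)$ extend smoothly across the origin, since $r^m Y_{m,l}(\omega)$ is the restriction to the sphere of a homogeneous harmonic polynomial of degree $m$; this has to be combined with uniform-in-$(m,l)$ decay estimates on suitable seminorms of $\phi_{m,l}$, which must propagate from the smoothness and compact support of $g$. Concretely, one must verify that inverting $D^m$ to pass from $h_{m,l}$ back to $\phi_{m,l}$ preserves the bounds needed for $\sum_{m,l} f_{m,l}(r) Y_{m,l}(\omega)$ to converge in $C_c^\infty(\Bb)$, and that compactness of support of $\phi_{m,l}$ near $t=2$ is compatible with compactness of support of $F_{m,l}$ near $r=1$. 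This is the direct analog of the corresponding step in the odd-dimensional argument of \cite{AAKN1}, the substantive new feature being the non-local integral symmetry in place of the pointwise one; I expect the technical estimates to follow the same architecture, with the explicit kernel in the integral condition used to control $\phi_{m,l}$ in terms of $g_{m,l}$ and its derivatives.
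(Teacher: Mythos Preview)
Your necessity argument is essentially the paper's: the explicit $\phi_{m,l}$ the paper writes down is, up to constants, exactly $t^{\,n+2m-2}\,\Rc_{n+2m}F_{m,l}(t)$ with $F_{m,l}(r)=r^{-m}f_{m,l}(r)$, and then the radial necessity in dimension $n+2m$ gives the symmetry relation. So far so good.

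For sufficiency, however, you take a genuinely different route from the paper, and the difference matters. The paper does \emph{not} reconstruct $f$ coefficient-by-coefficient and then prove convergence of $\sum_{m,l} r^{m}F_{m,l}(r)Y_{m,l}(\omega)$ in $C_c^\infty(\Bb)$. Instead, it stops one step short of the full radial result: from the symmetry relation for $\phi_{m,l}$ it invokes only the cross-product identity (Theorem~\ref{CP-Identity}) with $\alpha$ replaced by $m+\alpha$, obtaining
\[
y_{\alpha+m}(\lambda)\int_0^2 t\,\phi_{m,l}(t)\,j_{\alpha+m}(\lambda t)\,\D t
= j_{\alpha+m}(\lambda)\int_0^2 t\,\phi_{m,l}(t)\,y_{\alpha+m}(\lambda t)\,\D t .
\]
Then the derivative formula $D^{m}j_\alpha(\lambda t)=c\,\lambda^{2m}j_{\alpha+m}(\lambda t)$ (and the same for $y$), together with $m$ integrations by parts using $h_{m,l}=D^{m}\phi_{m,l}$, converts this into the statement that $\widehat{g}_{m,l}(\lambda)$ vanishes at every zero of $j_{m+\alpha}$. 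That is precisely the hypothesis of the Agranovsky--Finch--Kuchment characterization (Theorem~\ref{T2.4}), which then delivers $f\in C_c^\infty(\Bb)$ in one stroke. All the analytic issues you correctly flag---uniform-in-$m$ bounds when inverting $D^{m}$, uniform support control near $r=1$, summability of the reassembled series---are thereby outsourced to that pre-existing theorem and never need to be addressed directly. Your plan would work if you could supply those estimates, but the paper's route avoids them entirely; in particular, the ``same architecture as \cite{AAKN1}'' you expect to follow is in fact this reduction to an earlier range theorem, not a direct reassembly.
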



As part of the proof of the sufficiency in Theorems \ref{range} and \ref{Thm1.4}, we derive a couple of original and interesting identities involving normalized Bessel functions of the first and the second kind.
The first result is an integral cross-product identity for Bessel functions of integer order, complementing a similar relation for Bessel functions of half-integer order, obtained in the odd dimensional case in our earlier work \cite{AAKN1}.
\begin{theorem}\label{CP-Identity}
        Let $\alpha\in\mathbb{N}\cup \{0\}$. Assume that for each $0<s<1$ the function $h\in C_c^{\infty}((0,2))$ satisfies equation \eqref{RC-Radial}. 
        Then for any $\lambda>0$, the following integral cross product identity holds:
        \[
	y_{\A}(\lambda)\int\limits_0^{2}th(t) j_{\A}(\lambda t) \D t =j_{\A}(\lambda)\int\limits_0^{2}th(t) y_{\A}(\lambda t) \D t. 
	\]
    \end{theorem}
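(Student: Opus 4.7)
The identity to prove can be restated as
\[
\int_0^2 t\, h(t)\, \Phi_\lambda(t)\, \D t = 0, \qquad \Phi_\lambda(t) := y_\alpha(\lambda)\, j_\alpha(\lambda t) - j_\alpha(\lambda)\, y_\alpha(\lambda t),
\]
for every $\lambda > 0$. The plan is to derive an integral representation for $\Phi_\lambda(t)$ whose kernel in $t$ matches precisely the kernel appearing in \eqref{RC-Radial}, and then to invoke \eqref{RC-Radial} after interchanging the order of integration.

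The main step, and the principal obstacle, is to establish a Nicholson-type identity of the shape
\[
\Phi_\lambda(t) = \frac{\varepsilon(t)}{t^{2\alpha}} \int_0^{|1-t|} \mu_\alpha(\lambda,s)\, \bigl\{[(1+t)^2 - s^2][(1-t)^2 - s^2]\bigr\}^{\alpha - \frac{1}{2}}\, \D s,
\]
valid for $t \in (0,1) \cup (1,2)$, where $\varepsilon(t) = +1$ for $0 < t < 1$, $\varepsilon(t) = -1$ for $1 < t < 2$, and $\mu_\alpha(\lambda,s)$ is an explicit $\lambda$-dependent weight involving only Bessel functions of the first kind. This is the ``Nicholson-type identity'' highlighted in the abstract. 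To derive it I would start from Poisson's integral representation for $j_\alpha$ together with an appropriate Schl\"afli / Mehler--Sonine type representation of $y_\alpha$, plug both into $\Phi_\lambda(t)$ to obtain a two-variable oscillatory double integral, and then perform a change of variables packaging the two Poisson parameters into a single variable $s$ whose range is governed by the quartic $[(1+t)^2 - s^2][(1-t)^2 - s^2]$. This quartic is exactly the kernel polynomial appearing in \eqref{RC-Radial}, so the substitution is dictated by the geometry of the symmetry relation itself. The ``interesting equality involving elliptic integrals'' alluded to in the abstract should arise at this stage, collapsing one of the iterated integrals into the desired closed form. A natural sanity check along the way is that $\Phi_\lambda(1) = 0$, which must be reflected in the sign change $\varepsilon(t) : +1 \leadsto -1$ at $t = 1$.

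With the representation in hand, the rest of the argument is a routine Fubini manipulation. Substituting into $\int_0^2 t h(t) \Phi_\lambda(t) \D t$ and swapping the order of integration produces
\[
\int_0^1 \mu_\alpha(\lambda,s)\left[\int_0^{1-s} \frac{t h(t)}{t^{2\alpha}} K_\alpha(s,t)\, \D t - \int_{1+s}^{2} \frac{t h(t)}{t^{2\alpha}} K_\alpha(s,t)\, \D t\right] \D s,
\]
where $K_\alpha(s,t) := \{[(1+t)^2 - s^2][(1-t)^2 - s^2]\}^{\alpha - 1/2}$. The bracketed quantity vanishes for every $s \in (0,1)$ directly from the hypothesis \eqref{RC-Radial}, so the entire integral is zero, giving Theorem \ref{CP-Identity}.

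The hardest point is the Nicholson-type representation in the integer-order case. For half-integer $\alpha$, handled in \cite{AAKN1}, the spherical Bessel functions are elementary and an analogous representation follows from product-to-sum identities. For integer $\alpha$, however, $y_\alpha$ carries a logarithmic singularity at the origin and admits no Poisson-type formula; pinning down $\mu_\alpha(\lambda,s)$ and verifying the representation rigorously will likely demand either contour deformation arguments for the Hankel function $H_\alpha^{(1)}$, or a limiting argument from non-integer orders combined with the elliptic integral identity mentioned above.
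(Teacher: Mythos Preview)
Your overall plan---derive a Nicholson-type representation for $\Phi_\lambda(t)$ and then Fubini to reduce to \eqref{RC-Radial}---is exactly the paper's strategy, and the weight that eventually appears in the integral part is indeed (up to constants) $\mu_\alpha(\lambda,s)=s\,j_\alpha(\lambda s)$, obtained via the substitution $s=\sqrt{1+t^2-2t\cosh\zeta}$ in a contour integral. The genuine gap is the \emph{form} you conjecture for the representation. For integer $\alpha\ge 1$ the correct identity (Theorem~\ref{Nicholson-type formula}) is
\[
C(\alpha)\,\Phi_\lambda(t)=\frac{\pm 1}{2^{2\alpha-1}t^{2\alpha}}\int_0^{|1-t|} s\,j_\alpha(\lambda s)\,K_\alpha(s,t)\,\D s\;-\;\sum_{j=0}^{\alpha-1}a_j\,D_s^{\,j}f(0),
\]
where the $D_s^{\,j}f(0)$ are explicit rational functions of $t$ carrying a factor $\mathrm{sgn}(t-1)$. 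These correction terms are \emph{not} absorbable into an integral over $s\in(0,|1-t|)$ with any weight $\mu_\alpha(\lambda,s)$; a pure integral representation of the shape you posit does not exist for $\alpha\ge 1$. After integrating against $t\,h(t)$ and swapping orders, the integral piece vanishes exactly as you outline, but the correction piece needs a separate argument: one must check $\int_0^1 t h(t)\,D_s^{\,j}f(0)\,\D t=\int_1^2 t h(t)\,D_s^{\,j}f(0)\,\D t$ for each $0\le j\le\alpha-1$, and this follows by applying $D_s^{\,j}\big|_{s=0}$ to both sides of \eqref{RC-Radial}. Without this step the proof is incomplete for every $\alpha\ge 1$.

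Two smaller corrections. The paper does not obtain the Nicholson formula from Poisson/Mehler--Sonine representations; instead it shows that the contour integral $\tilde y(z,w)=\int_C j_\alpha(\sqrt{z^2+w^2-2zw\cosh\zeta})\sinh^{2\alpha}\zeta\,\D\zeta$ satisfies an \emph{inhomogeneous} Bessel ODE in $z$, builds $P(z,w)=\sum_j a_j D_s^{\,j}f(0)$ as a particular solution of the same ODE, and identifies the homogeneous solution $\tilde y-P$ via anti-symmetry in $(z,w)$ and $w\to 0$ asymptotics. And the elliptic-integral equality you invoke plays no role here; it is used only in the necessity direction of Theorem~\ref{range}.
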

The second relation involving the normalized Bessel functions is a Nicholson-type identity, similar to the result of \cite{Hrycak-Schmutzhard}, which agrees with our result for $\A=0$.

\begin{theorem}\label{Nicholson-type formula} Let $\alpha\in\mathbb{N}\cup \{0\}$. For all complex numbers $z, w$, $z\neq w$ with $|\mbox{arg}(z)|<\pi$, $|\mbox{arg}(w)|<\pi$, consider the function
\Beq\label{Eqss3.1}
y(z,w)=\int\limits_{C} j_{\A}(\sqrt{z^2+w^2-2zw\cosh \zeta}) \sinh^{2\A}\zeta \D \zeta-\sum\limits_{j=0}^{\A-1} a_{j} D_{s}^{j} f(0),
\Eeq
where 
\[
f(s)=\frac{\A \,\mathrm{sgn}(|z|-|w|)}{2^{2\A-2}}\frac{\Bigg{\{}(z^2-w^2)^2-2s^2(z^2+w^2)+s^4\Bigg{\}}^{\frac{2\A-1}{2}}}{z^{2\A}w^{2\A}},
\]
and $a_{j}'s$ are defined iteratively by $a_0=1$ and $a_{j}=2(\A-j)a_{j-1}$. In \eqref{Eqss3.1}, $C$ is any contour joining $0$ and $\ln z-\ln w$. Then 
\[
y(z,w)=C(\A)\lb j_{\A}(z) y_{\A}(w)-y_{\A}(z) j_{\A}(w)\rb,
\]
where $C(\A)$ is a non-zero constant only depending on $\A$. The summation in \eqref{Eqss3.1} is understood as contributing zero sum for $\A=0$.
\end{theorem}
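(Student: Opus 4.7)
The plan is to show that, as a function of $z$ with $w$ fixed, both $y(z,w)$ and the cross product $C(\alpha)[j_\alpha(z) y_\alpha(w) - y_\alpha(z) j_\alpha(w)]$ satisfy the same normalized Bessel equation of order $\alpha$, vanish on the diagonal $z = w$, and agree in their derivative data there; the same analysis in the variable $w$ then forces the two expressions to coincide on their common domain of analyticity up to a constant, which is fixed by one asymptotic comparison.

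The central calculation is to apply the normalized Bessel operator in $z$ to the integral term in \eqref{Eqss3.1}. Writing $r(\zeta) = \sqrt{z^2 + w^2 - 2 z w \cosh\zeta}$ and using the Bessel equation satisfied by $j_\alpha(r)$ together with the chain rule, one recasts the $z$-Bessel operator acting on $j_\alpha(r)\sinh^{2\alpha}\zeta$ as the $\zeta$-derivative of an explicit expression. Integration over the contour from $0$ to $\ln z - \ln w$ then converts the action of the Bessel operator into two boundary evaluations. At the endpoint $\zeta = \ln z - \ln w$ one has $r = 0$, where $j_\alpha$ is analytic and the contribution is benign; the remaining boundary piece at $\zeta = 0$, where $r = |z - w|$, is exactly what the correction sum $\sum_{j=0}^{\alpha-1} a_j D_s^j f(0)$ is engineered to cancel. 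The key algebraic observation is the factorization
\[
(z^2 - w^2)^2 - 2 s^2 (z^2 + w^2) + s^4 = [(z+w)^2 - s^2][(z - w)^2 - s^2],
\]
which turns the successive $s$-derivatives of $f$ at $s=0$ into explicit polynomial expressions in $z \pm w$ that telescope against the integration-by-parts boundary terms precisely under the recursion $a_0 = 1$, $a_j = 2(\alpha-j)a_{j-1}$.

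The vanishing of $y(z,w)$ on $z = w$ is automatic, since the contour collapses to a point and the prefactor $\mathrm{sgn}(|z|-|w|)$ sends each correction term to zero; the same vanishing is obvious for the cross product. Combined with the matching ODEs, this yields proportionality on the whole domain, and the constant $C(\alpha)$ is then pinned down by a single asymptotic comparison, for example by expanding both sides near $z = w$ and matching the leading coefficient against the classical Wronskian $j_\alpha(z) y_\alpha'(z) - y_\alpha(z) j_\alpha'(z) = c_\alpha z^{-(2\alpha+1)}$.

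The main obstacle is the verification that the specific coefficients $a_j$ and the precise form of $f(s)$ cancel the boundary terms exactly. I would proceed by induction on $\alpha$, with the base case $\alpha = 0$ handled by the classical Nicholson-type identity of \cite{Hrycak-Schmutzhard} (where the correction sum is empty by convention), and the inductive step driven by the Bessel recurrence relating $j_\alpha$ and $j_{\alpha+1}$, which on the $f$-side corresponds to $s$-differentiation and increment of the exponent $\alpha - 1/2$. Isolating this combinatorial identity from the tangle of chain-rule and repeated integration-by-parts computations is the principal technical task; once it is extracted cleanly, the theorem follows.
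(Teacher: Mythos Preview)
Your broad strategy---show that $y(z,w)$ satisfies the normalized Bessel equation in $z$, then identify it with the cross product via auxiliary constraints---is exactly the paper's approach. But three pieces of your mechanism are misidentified, and each is a genuine gap.

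First, you have the endpoint roles reversed. At $\zeta = 0$ the weight $\sinh^{2\alpha}\zeta$ vanishes (for $\alpha\geq 1$), so boundary contributions there are zero; at $\zeta = \ln z - \ln w$ one has $r=0$ but $\sinh^{2\alpha}\zeta \neq 0$, and it is precisely this endpoint that produces the inhomogeneous term $\frac{\alpha}{2^{2\alpha-2} w^{2\alpha}}\bigl(\frac{z^2-w^2}{z}\bigr)^{2\alpha-1}$ when the Bessel operator hits the integral. Second, the correction sum $P(z,w)=\sum_{j=0}^{\alpha-1} a_j D_s^j f(0)$ is not designed to cancel boundary terms from integration by parts. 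In the paper it is shown, by a direct and rather involved computation using Fa\`a di Bruno and binomial identities, that $P(z,w)$ \emph{as a function of $z$} satisfies the \emph{same} inhomogeneous Bessel equation as the integral; subtracting then gives a homogeneous solution. Your picture of ``telescoping against integration-by-parts boundary terms'' is not what happens, and an induction on $\alpha$ via the Bessel recurrence would have to reproduce this ODE verification, which you have not sketched. Third, vanishing on the diagonal $z=w$ alone is insufficient to force the cross-product form: once you know $y(z,w)=c_1(w)j_\alpha(z)+c_2(w)y_\alpha(z)$, the diagonal condition gives a single linear relation between $c_1(w)$ and $c_2(w)$, not enough to pin down both. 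The paper instead uses the antisymmetry $y(z,w)=-y(w,z)$ together with the ODE in the $w$ variable to force $y$ into the form $a_{12}[j_\alpha(w)y_\alpha(z)-y_\alpha(w)j_\alpha(z)]$; the constant is then fixed by setting $z=rw$ and sending $w\to 0$, not by a Wronskian expansion near the diagonal.
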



 The proof of necessity in Theorems \ref{range} and \ref{Thm1.4} uses an elegant equality involving elliptic integrals which, to the best of our knowledge, has not appeared in prior literature. We state this result below.

\begin{theorem}\label{Elliptic_Integrals} Let us denote
	\begin{align*}
	P(t) = (a-t)(t-b)(t-c)(t-d) = (a-t)(b-t)(c-t)(t-d),
	\intertext{where $a=(1+u)^2,\, b=(1+s)^2,\, c=(1-s)^2,\, d=(1-u)^2$ with  $0<s<u<1$.}
 \end{align*} 
        Then for any real $\B>-1$,
    	\Beq\label{Eqss3.4} 
    	\int\limits_b^a \frac{P^\B}{t^{\B}\sqrt{t}}\D t=  \int\limits_d^c \frac{P^\B}{t^{\B}\sqrt{t}}\D t.
    	\Eeq
    \end{theorem}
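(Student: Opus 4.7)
The plan is to reduce the identity, via successive substitutions, to a pointwise equality of integrands on a common interval, exploiting the symmetry of the roots $\sqrt{a}+\sqrt{d}=\sqrt{b}+\sqrt{c}=2$ and the products $\sqrt{a}\sqrt{d}=1-u^2$, $\sqrt{b}\sqrt{c}=1-s^2$. First, I would substitute $\tau=\sqrt{t}$, which turns the two integrals into integrals over the $\tau$-intervals $(1+s,1+u)$ and $(1-u,1-s)$ with integrand $2\,P(\tau^2)^\beta/\tau^{2\beta}\,d\tau$. In this form the polynomial factors cleanly as
\[
P(\tau^2)=\bigl[u^2-(\tau-1)^2\bigr]\bigl[(\tau-1)^2-s^2\bigr]\cdot\bigl[(\tau+1)^2-u^2\bigr]\bigl[(\tau+1)^2-s^2\bigr],
\]
which makes the root symmetry manifest.

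Second, I would apply the involution $\tau\mapsto 2-\tau$, which maps $(1-u,1-s)$ bijectively onto $(1+s,1+u)$. Writing $K(\tau):=\bigl[u^2-(\tau-1)^2\bigr]\bigl[(\tau-1)^2-s^2\bigr]$ and $J(\tau):=\bigl[(\tau+1)^2-u^2\bigr]\bigl[(\tau+1)^2-s^2\bigr]$, the factor $K$ is invariant under this involution (it depends only on $(\tau-1)^2$), while $J$ and the weight $\tau^{-2\beta}$ transform to $J(2-\tau)$ and $(2-\tau)^{-2\beta}$, respectively. After this step both integrals live on the common interval $(1+s,1+u)$, and the identity reduces to showing
\[
\int_{1+s}^{1+u}K(\tau)^\beta\left\{\left[\frac{J(\tau)}{\tau^{2}}\right]^{\beta}-\left[\frac{J(2-\tau)}{(2-\tau)^{2}}\right]^{\beta}\right\}d\tau=0.
\]
A direct check shows the bracketed difference is not pointwise zero, so the vanishing must come from global cancellation.

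To establish this residual identity I would seek a further change of variable on $(1+s,1+u)$, expected to be a Möbius-type involution whose Jacobian absorbs neatly into $K$ and which interchanges the two terms in the bracket. The requisite algebraic identity should reflect precisely the special relations $\sqrt{a}\sqrt{d}=1-u^{2}$, $\sqrt{b}\sqrt{c}=1-s^{2}$ that make this elliptic-type integral periodic in the required sense. As a safety net, if the direct substitution proves intractable, one can first prove the identity for $\beta\in\mathbb{N}\cup\{0\}$—where both integrands are finite sums of powers of $t$, and the identity reduces to an explicit polynomial relation in $u,s$ arising from the cancellations $(1+u)^{k}-(1+s)^{k}=(1-s)^{k}-(1-u)^{k}$ modulo $P$—and then analytically continue in $\beta$ on $\mathrm{Re}\,\beta>-1$, since both sides of \eqref{Eqss3.4} depend holomorphically on $\beta$ in that region.

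The hard part will be identifying the final substitution (or polynomial identity) that closes the argument: the residual integrand vanishes only through a nontrivial interplay between the four roots, and one must ensure that whatever change of variable is used is an involution of $(1+s,1+u)$ whose Jacobian reconstitutes $K^\beta$ exactly. I expect this to be the technical heart of the proof.
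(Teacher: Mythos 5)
Your reduction via $\tau=\sqrt{t}$ and the reflection $\tau\mapsto 2-\tau$ is a reasonable start, but the argument stops exactly where the theorem actually lives: you never exhibit the ``M\"obius-type involution'' that is supposed to produce the cancellation, and you yourself flag it as the unresolved technical heart. In the paper the cancellation is not obtained by matching integrands pointwise under a change of variable on a common interval; it is a level-set phenomenon. After writing the integrand as $(\sqrt{t})'(P/t)^{\beta}$ and integrating by parts, one substitutes $y=P(t)/t$, which is monotone on $(d,r_2)$, $(r_2,c)$, $(b,r_4)$, $(r_4,a)$ and attains the \emph{same} maximum $\gamma$ on $(d,c)$ and on $(b,a)$ (this equality of maxima is itself a lemma, using $P(r_2)=4(u^2-s^2)^2r_2$ and $P(r_4)=4(u^2-s^2)^2r_4$). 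The identity then reduces to the algebraic statement that for every $0<y_0<\gamma$ the four roots $t_1\ge t_2\ge t_3\ge t_4$ of $P(t)=y_0t$ satisfy $\sqrt{t_1}+\sqrt{t_4}=\sqrt{t_2}+\sqrt{t_3}$, which is proved via the quartic formula and boils down to the relation $a_3^2-4a_2+8\sqrt{a_0}=0$ among the coefficients --- this is where the special structure $a=(1+u)^2$, $b=(1+s)^2$, $c=(1-s)^2$, $d=(1-u)^2$ enters. Nothing in your outline supplies an equivalent of this step, so the main route has a genuine gap.

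Your fallback route is also not sound as stated. The cancellation you invoke for integer $\beta$, namely $(1+u)^k-(1+s)^k=(1-s)^k-(1-u)^k$, is false for $k\ge 2$ (for $k=2$ the two sides differ by $2(u^2-s^2)$), and no explicit polynomial verification is actually carried out. Moreover, even if the identity were verified at all nonnegative integers, ``analytic continuation'' does not finish the job: a function holomorphic in $\mathrm{Re}\,\beta>-1$ can vanish at every nonnegative integer without vanishing identically (e.g.\ $\sin(\pi\beta)$), so you would need a uniqueness theorem of Carlson type together with growth estimates in $\beta$, neither of which you provide. As it stands, neither the primary argument nor the backup closes the proof; note also that the case $\beta=-1/2$, which is the one the paper actually needs in $(-1,0)$, requires a separate treatment (the paper invokes a Schwarz--Christoffel/fractional linear transformation argument) that your outline does not address.
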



\section{Preliminaries}\label{sec:prelims}
In this section, we collect basic definitions and some technical results needed for our discussions. 
\subsection{Spherical mean transform} The transform considered here integrates a function supported in the unit ball over spheres, with centres restricted to the unit sphere. More precisely, for $f \in C_c^\infty (\Bb)$, the spherical mean transform is defined as 
\[
\Rc f (p,t) = \frac{1}{\o_n} \int\limits_{\Sn} f(p+t\theta) \D S(\theta), \quad (p,t) \in \Sn \times [0,2], 
\]
where $\D S(\theta)$ denotes the surface measure on the unit sphere and the normalizing constant $\o_n$ denotes its surface area. Note that we only need to  consider $0<t<2$ due to the support restriction on $f$.

\subsection{Spherical harmonics} These are restrictions of homogeneous harmonic polynomials to the unit sphere in $\Rn$. It is well known that for a fixed $m \geq 0$, there are 
\[
d_m = \frac{(2m+n-2) (n+m-3)!}{m! (n-2)!}, \quad d_0 = 1,
\]
linearly independent spherical harmonics of degree $m$, which we denote by $\{Y_{m,l}\}$, for $0\leq l\leq d_m$. These form a complete orthonormal basis. One of the most useful facts that we need about the spherical harmonics is the following lemma \cite{Claus_Muller}.
\begin{lemma}[Funk-Hecke] 
    For a smooth function $h$ on $[-1,1]$, we have
    \[
    \int\limits_{\Sb^{n-1}} h(\theta \cdot \o) Y_{m,l}(\o) \D S(\o) = c(n,m) Y_{m,l}(\theta),
    \]
    where
    \[
    c(n,m) = \frac{|\Sb^{n-2}|}{C_m^{\A}(1)} \int\limits_{-1}^1 h(t) C_m^{(n-2)/2}(t) (1-t^2)^{(n-3)/2} \D t.
    \]
    Here $C_m^{\A}$ denote the Gegenbauer polynomials of degree $m$ and order $\A > -1/2$. These are a family of orthogonal polynomials on $[-1,1]$, with the weight $(1-t^2)^{\A -1/2}$, given by the Rodrigues formula:
    \[
    C_m^\A (t) = \frac{(-1)^m}{2^mm!} \frac{\Gamma\lb \A+\frac{1}{2}\rb \Gamma \lb m+2\A \rb}{\Gamma(2\A)\Gamma\lb \A+m+\frac{1}{2}\rb } (1-t^2)^{-\A+\frac{1}{2}} \frac{\D^m}{\D t^m} \Big{\{}(1-t^2)^{m+\A-\frac{1}{2}}\Big{\}}.
    \]
\end{lemma}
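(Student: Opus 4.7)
The plan is to derive the Funk--Hecke formula by exploiting the rotational symmetry of the integral operator together with the representation-theoretic structure of the spaces of spherical harmonics, then to compute the scalar $c(n,m)$ by a direct evaluation at the north pole against a zonal harmonic.

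First, I would introduce the operator
\[
T_h Y(\theta) := \int\limits_{\Sb^{n-1}} h(\theta\cdot \o)\,Y(\o)\,\D S(\o)
\]
and verify that it commutes with the natural unitary action of $SO(n)$ on $L^2(\Sb^{n-1})$; this is immediate from the rotation invariance of the surface measure and of the Euclidean inner product, giving $T_h(Y\circ R) = (T_h Y)\circ R$ for every $R \in SO(n)$. Hence $T_h$ preserves each eigenspace $\Hc_m$ of the spherical Laplacian, since $\Hc_m$ is $SO(n)$-invariant. Because $\Hc_m$ is irreducible as an $SO(n)$-representation, Schur's lemma forces the restriction $T_h|_{\Hc_m}$ to be multiplication by a scalar $c(n,m)$, which yields the pointwise identity $T_h Y_{m,l} = c(n,m)\, Y_{m,l}$ asserted in the lemma.

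To obtain the explicit expression for $c(n,m)$, I would evaluate both sides at $\theta = e_n$ on the zonal harmonic $Y(\o) = C_m^{\A}(\o\cdot e_n)$ (which belongs to $\Hc_m$ by standard properties of Gegenbauer polynomials), with $\A = (n-2)/2$. Parameterizing $\o = (\sqrt{1-t^2}\,\o', t)$ for $\o' \in \Sb^{n-2}$ and $t \in [-1,1]$, the surface measure factors as $\D S(\o) = (1-t^2)^{(n-3)/2}\,\D S(\o')\,\D t$. Since the integrand is $\o'$-independent, the left-hand side collapses to
\[
|\Sb^{n-2}|\int\limits_{-1}^{1} h(t)\,C_m^{\A}(t)\,(1-t^2)^{(n-3)/2}\,\D t,
\]
while the right-hand side equals $c(n,m)\,C_m^{\A}(1)$. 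Solving for $c(n,m)$ yields the stated formula.

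The main obstacle is the appeal to Schur's lemma, which relies on the non-trivial fact that $\Hc_m$ is an irreducible $SO(n)$-module for each $m$. A self-contained route avoids representation theory by observing that for fixed $\theta$ the map $Y \mapsto T_h Y(\theta)$ is a linear functional on $\Hc_m$ invariant under the stabilizer of $\theta$ in $SO(n)$; since the stabilizer-fixed subspace of $\Hc_m$ is one-dimensional and spanned by the zonal harmonic based at $\theta$, the addition formula for spherical harmonics then yields proportionality between $T_h Y(\theta)$ and $Y(\theta)$ with the same constant $c(n,m)$ for all $Y \in \Hc_m$.
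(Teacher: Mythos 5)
The paper does not prove this lemma at all; it is quoted as a classical result with a citation to M\"uller's book, so there is no ``paper proof'' to compare against. Your argument is the standard textbook proof and is essentially correct: rotation-equivariance of $T_h$, invariance of each harmonic subspace $\Hc_m$, a scalar action on $\Hc_m$, and evaluation on the zonal harmonic $C_m^{\A}(\o\cdot e_n)$ at the north pole to extract $c(n,m)$ (using $C_m^{\A}(1)\neq 0$). Two small points deserve attention. First, in your fallback argument the stabilizer of $\theta$ in $SO(n)$ is $SO(n-1)$, whose fixed subspace in $\Hc_m$ is one-dimensional only for $n\geq 3$; for $n=2$ (which the paper's even-dimensional setting includes) the $SO(2)$-stabilizer is trivial, and you need the stabilizer in $O(n)$, i.e.\ the reflection fixing $\theta$, to cut the fixed subspace down to the span of the zonal harmonic. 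Since $h(\theta\cdot\o)$ is invariant under the full orthogonal stabilizer this is a harmless fix, but as written the claim is false for $n=2$. Second, the zonal-harmonic route gives a priori a constant $\lambda(\theta)$ depending on $\theta$; you should say explicitly that the equivariance $T_h(Y\circ R)=(T_hY)\circ R$ established at the start forces $\lambda$ to be constant on the sphere. With these two clarifications the proof is complete.
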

Any function $f \in C_c^\infty(\Rn)$ has the spherical harmonics expansion
\[
f(x) = \sum\limits_{m=0}^\infty \sum\limits_{k=1}^{d_m} f_{m,l}(|x|) Y_{m,l}(x/|x|), \quad f_{m,l} (|x|) = \int\limits_{\Sn} f(|x|\theta) \overline{Y}_{m,l}(\theta) \D S(\theta).
\]

Viewing $(p,t) \in \Sn \times \Rb_+$ as polar coordinates of the point $tp$, the function $g(p,t) = \Rc f(p,t)$ can also be expanded in terms of spherical harmonics
\[
g(p,t) = \sum\limits_{m=0}^\infty \sum\limits_{l=1}^{d_m} g_{m,l} (t) Y_{m,l}(p), \quad g_{m,l}(t) = \int\limits_{\Sn} g(p,t) \overline{Y}_{m,l}(p) \D S(p).
\]

The advantage of considering the spherical harmonics expansions of $f$ and $\Rc f$ is that the integral equation reduces to an infinite system of decoupled one-dimensional equations. More precisely, the $(m,l)$-th coefficient of $\Rc f$ depends only on the $(m,l)$-th coefficient of $f$, given by the following expression (see \cite{Salman_Article}):
\[
(\Rc f)_{m,l} (t) = \frac{\o_{n-1}}{\o_n t^{n-2}C_m^{\A}(1)} \int\limits_{|1-t|}^1 u^{n-2} f_{m,l}(u) C_m^{\frac{n-2}{2}} \left( \frac{1+u^2-t^2}{2u} \right) \left\{ 1- \frac{(1+u^2-t^2)^2}{4u^2} \right\}^{\frac{n-3}{2}} \D u.
\]

\subsection{Fa\`a di Bruno formula for \boldmath{$D$} derivatives} Recall that $D= \frac{1}{t} \frac{\D}{\D t}$. We present the expression for higher $D$-derivatives of composition of two functions of one real variable in terms of the derivatives of individual functions; see \cite{Krantz-Parks_primer}.
\begin{lemma}[Fa\'a di Bruno]\label{L2.2}
    Let $F,G \in C^\infty(\Rb)$. Then
    \[
    D^k \left( F (G(t)) \right) = \sum\limits_{j=1}^k F^{(j)} (G(t)) B_{k,j} (DG(t), \dots, D^{k-j+1}G(t)),
    \]
    where $F^{(j)}$ denote the usual derivatives of $F$, and $B_{k,j}$ denote the Bell's polynomials defined as
    \[
    B_{k,j}(x_1, \dots, x_{k-j+1}) = \sum \frac{k!}{i_1! \dots i_{k-j+1}!} \lb \frac{x_1}{1!}\rb^{i_1} \dots \lb \frac{x_{k-j+1}}{(k-j+1)!}\rb^{i_{k-j+1}},
    \]
    the sum being taken over all sequences $i_1, \dots, i_{k-j+1}$ of non-negative integers such that
    \begin{align*}
        i_1 + \dots + i_{k-j+1} &= j,\\
        i_1 + 2 i_2 + \dots + (k-j+1) i_{k-j+1} &= k.
    \end{align*}
\end{lemma}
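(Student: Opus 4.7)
The plan is to reduce the identity to the classical Faà di Bruno formula via a change of variable. The key observation is that if one sets $u = t^2/2$, then $\D u/\D t = t$, and so $\D/\D u = (1/t)\,\D/\D t = D$. Consequently, $D^k$ applied to a function of $t$ coincides with $\D^k/\D u^k$ applied to the same function viewed in the variable $u$, and the problem reduces to the classical case.

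More precisely, define $\wt G(u) = G(\sqrt{2u})$, so that $F(G(t)) = F(\wt G(u))$. A short induction on $m$ using the chain rule shows $\wt G^{(m)}(u) = D^m G(t)$ for every $m \ge 1$. Hence
\[
D^k\bigl(F(G(t))\bigr) \;=\; \frac{\D^k}{\D u^k} F(\wt G(u)) \;=\; \sum_{j=1}^k F^{(j)}(\wt G(u))\, B_{k,j}\bigl(\wt G'(u),\ldots,\wt G^{(k-j+1)}(u)\bigr),
\]
where the second equality is the classical Faà di Bruno formula (as proven in Krantz--Parks). Translating back to $t$ using $\wt G(u) = G(t)$ and $\wt G^{(m)}(u) = D^m G(t)$ yields exactly the stated identity.

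An alternative, fully self-contained route is direct induction on $k$. The base case $k = 1$ reads $D(F\circ G) = F'(G)\,DG = F'(G)\,B_{1,1}(DG)$. For the inductive step, one applies $D$ to the inductive hypothesis, using the Leibniz rule (which holds for $D$ since it equals $t^{-1}$ times a derivation), and then collects terms via the standard Bell polynomial recurrence
\[
B_{k+1,j}(x_1,\ldots,x_{k-j+2}) \;=\; \sum_{i=0}^{k-j+1} \binom{k}{i}\, x_{i+1}\, B_{k-i,\,j-1}(x_1,\ldots,x_{k-i-j+2}),
\]
together with the index shift $F^{(j)} \mapsto F^{(j+1)}$ that arises when $D$ differentiates the outer factor $F^{(j)}(G(t))$. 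I expect the only real obstacle to be notational bookkeeping: the change-of-variable proof is the shorter of the two and reduces the matter to observing that $D$ is conjugate to ordinary differentiation, with the single nontrivial check being $\wt G^{(m)}(u) = D^m G(t)$, which is a quick induction.
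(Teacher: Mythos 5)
Your argument is correct, but note that the paper does not actually prove this lemma: it is stated as a known result with a citation to Krantz--Parks, so there is no in-paper proof to compare against. What you supply is exactly the missing justification, and both of your routes work. The conjugation route is the cleaner one: since $u=t^2/2$ gives $\D/\D u=(1/t)\,\D/\D t=D$, the operator $D$ is literally ordinary differentiation in the variable $u$, and the classical Faà di Bruno formula transports verbatim; the one check, $\wt G^{(m)}(u)=D^mG(t)$, is the trivial induction you describe. Two small caveats you should make explicit. First, $D$ is singular at $t=0$ and the substitution $u=t^2/2$ is a diffeomorphism only on $t>0$ (respectively $t<0$, via $t=-\sqrt{2u}$), so the identity should be asserted for $t\neq 0$; this is harmless here since the paper only ever applies the lemma away from the origin. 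Second, and perhaps more economically, one can bypass the change of variables entirely by observing that $D$ satisfies both the Leibniz rule $D(fg)=(Df)g+f(Dg)$ and the first-order chain rule $D(F\circ G)=F'(G)\,DG$, and the classical inductive proof of Faà di Bruno uses only these two properties of the differentiation operator --- which is essentially your second route, and is the reason the formula holds for any derivation in place of $\D/\D t$. Either way, the proposal is complete modulo the $t\neq0$ remark.
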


We need the following special case of the above formula when the inner function $G$ satisfies $D^jG = 0$ for $j \geq 3$. 
\begin{lemma}\cite{AAKN1}\label{L2.3}
    Let $F,G \in C^\infty (\Rb)$ such that $D^j G = 0$ for $j \geq 3$. Then
    \[
    D^k F(G(t)) = \sum\limits_{j \geq k/2}^k \frac{k!}{(2j-k)! (k-j)! 2^{k-j}} F^{(j)}(G(t)) (DG(t))^{2j-k} (D^2G(t))^{k-j}.
    \]
\end{lemma}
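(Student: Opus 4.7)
The plan is to derive Lemma \ref{L2.3} as a direct specialization of the general Fa\`a di Bruno formula recorded in Lemma \ref{L2.2}. Under the hypothesis $D^j G \equiv 0$ for $j \ge 3$, every argument of the Bell polynomial $B_{k,j}(DG(t), D^2 G(t), D^3 G(t), \ldots, D^{k-j+1} G(t))$ from the third slot onward vanishes. In the defining expansion of $B_{k,j}$, any multi-index $(i_1, \ldots, i_{k-j+1})$ with $i_r \ge 1$ for some $r \ge 3$ carries a factor of the form $(D^r G(t)/r!)^{i_r} = 0$ and therefore drops out. Only the multi-indices of the form $(i_1, i_2, 0, \ldots, 0)$ contribute.

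For such multi-indices, the two combinatorial constraints in Lemma \ref{L2.2} collapse to the $2\times 2$ linear system
\[
i_1 + i_2 = j, \qquad i_1 + 2 i_2 = k,
\]
whose unique solution is $i_2 = k - j$ and $i_1 = 2j - k$. The nonnegativity condition $i_2 \ge 0$ forces $j \le k$, while $i_1 \ge 0$ forces $j \ge k/2$ (equivalently $j \ge \lceil k/2\rceil$, since $j$ is an integer); this matches exactly the summation range asserted in the statement. For each admissible $j$ there is therefore precisely one surviving term in $B_{k,j}$, namely
\[
\frac{k!}{(2j-k)!\,(k-j)!}\left(\frac{DG(t)}{1!}\right)^{2j-k}\left(\frac{D^2 G(t)}{2!}\right)^{k-j} = \frac{k!}{(2j-k)!\,(k-j)!\,2^{k-j}}\,(DG(t))^{2j-k}\,(D^2 G(t))^{k-j}.
\]
Inserting this evaluation of $B_{k,j}$ back into Lemma \ref{L2.2} collapses the double sum of the general Fa\`a di Bruno expansion to the single sum claimed in Lemma \ref{L2.3}. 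As a sanity check, $k=2$ yields $F'(G(t))\,D^2 G(t) + F''(G(t))\,(DG(t))^2$, which agrees with the direct computation $D^2 F(G(t)) = D\bigl(F'(G(t))\,DG(t)\bigr)$.

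There is no serious obstacle here: the argument is a purely combinatorial specialization and requires no analytic input beyond the vanishing hypothesis on the $D$-derivatives of $G$. The only point needing care is the bookkeeping of the lower summation bound $j \ge k/2$, which is forced precisely by the nonnegativity constraint $i_1 = 2j - k \ge 0$ arising when we solve the partition equations.
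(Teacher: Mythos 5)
Your proof is correct and is exactly the specialization of the Fa\`a di Bruno formula (Lemma \ref{L2.2}) that the paper relies on: the paper itself cites \cite{AAKN1} for this lemma rather than reproving it, but the identical reasoning (only multi-indices $(i_1,i_2,0,\dots,0)$ survive, and the system $i_1+i_2=j$, $i_1+2i_2=k$ forces $i_1=2j-k$, $i_2=k-j$ with $j\geq k/2$) appears verbatim in the proof of Lemma \ref{L4.3}. Nothing is missing.
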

\subsection{Bessel functions} 
A central object in our study is the Bessel function. In this section, we fix our notation for the Bessel functions of the first and second kinds. 

For $\alpha \in \Rb \setminus \{-1,-2,\dots\}$, define the  Bessel function of first kind of order $\A$ to be 
\[
J_\A (x) = \lb \frac{x}{2}\rb^\A \sum\limits_{n=0}^\infty \frac{(-1)^n \lb \frac{x}{2}\rb^{2n}}{n! \Gamma(n+\A+1)}, \quad x \in (0,\infty),
\]
extended to negative integer orders by
\[
J_{-p}(x) = (-1)^p J_p (x).
\]
These are solutions to the following second order Bessel differential equation, 
\[
y'' + \frac{1}{x} y' + (1-\frac{\A^2}{x^2}) y = 0, \quad x \in (0,\infty).
\]
The normalized Bessel function of first kind of order $\A > -1/2$ is defined by
\[
j_\A(x) = 2^\A \Gamma(\A+1) \frac{J_\A (x)}{x^\A},
\]
and these are solutions of the normalized Bessel equation:
\[
y'' + \frac{2\A+1}{x} y' +y=0.
\]
They also satisfy the following derivation formula:
\begin{equation}\label{der}
    \lb \frac{1}{x} \frac{\D}{\D x} \rb^k (j_\A(x)) = \frac{(-1)^k \Gamma(\A+1)}{2^k \Gamma(k+\A+1)} j_{\A+k}(x), \quad \text{for all}~k \in \Nb.
\end{equation}
We will need the Fourier-Bessel transform of $g_{m,l}$ in the analysis below. This is defined as 
\Beq\label{FBT}
\wh{g}_{m,l}(\lambda)=\int\limits_0^{\infty}g_{m,l}(t)j_{\frac{n}{2}-1}(\lambda t) t^{n-1} \D t.
\Eeq

Using the Bessel functions of the first kind, we define the Bessel functions of the second kind by
\begin{equation}
    Y_\A (x) = \begin{cases}
        \frac{1}{\sin{\A \pi}} [\cos{\A \pi} J_\A(x) - J_{-\A}(x)], \quad &\A \notin \mathbb{Z},\\
        \lim\limits_{\A \to n} \frac{1}{\pi} \left[\frac{\partial}{\partial \A} J_\A (x) - (-1)^n \frac{\partial}{\partial \A} J_{-\A}(x)  \right], \quad &\A = n \in \mathbb{Z},
    \end{cases}
\end{equation}
and finally the normalized Bessel functions of second kind by 
\[
y_\A(x) = 2^\A \Gamma(\A+1) \frac{Y_\A(x)}{x^{\A}}.
\]
These satisfy \eqref{der} as well \cite[eq.~9.1.30]{Abramowitz-Stegun}.

\subsection{Range characterization} We will need the following characterization of the range of SMT from \cite{Agranovsky-Finch-Kuchment-range}.
\begin{theorem}\cite{Agranovsky-Finch-Kuchment-range}\label{T2.4}
    A function $g \in C_c^\infty(\Sn \times [0,2])$ is representable as $g = \Rc f$ for some $f \in C_c^\infty(\Bb)$ if and only if for any $m\geq 0$ and $1 \leq l \leq d_m$, the function $\wh{g}_{m,l}(x, \lambda)$  defined in \eqref{FBT} vanishes at the zeros of the normalized Bessel function $j_{m+\frac{n}{2}-1}(\lambda)$.
\end{theorem}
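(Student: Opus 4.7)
The plan is to reduce Theorem \ref{T2.4} to a Paley--Wiener-type statement on each spherical harmonic coefficient, via an explicit factorization
\[
\wh{g}_{m,l}(\lambda) \;=\; c_{n,m}\, j_{m+\frac{n}{2}-1}(\lambda)\, \wt{f}_{m,l}(\lambda),
\]
where $\wt{f}_{m,l}$ is an appropriate Hankel-type transform of $f_{m,l}$ (of order $m+\tfrac{n}{2}-1$) and $c_{n,m}$ is a nonzero constant. To derive this factorization I would insert the explicit formula for $(\Rc f)_{m,l}(t)$ recalled in Section~\ref{sec:prelims} into the defining integral \eqref{FBT} of $\wh{g}_{m,l}(\lambda)$, interchange the $t$- and $u$-integrations (justified by compact support of $f$), and evaluate the inner $t$-integral by a classical Sonine/Gegenbauer identity whose right-hand side factors as $j_{\frac{n}{2}-1+m}(\lambda)\, j_{\frac{n}{2}-1+m}(\lambda u)$ times explicit powers of $u$. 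Alternatively, the same factorization may be obtained more conceptually by Fourier-transforming $\Rc f$ on $\Rn$ and using the Funk--Hecke identity $\int_{\Sb^{n-1}} e^{i\lambda\,\theta\cdot\eta}\, Y_{m,l}(\theta)\,\D S(\theta) = c_{n,m}\, i^m\, j_{m+\frac{n}{2}-1}(\lambda)\, Y_{m,l}(\eta)$, which produces the Bessel factor directly from the angular integration.

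Necessity is then immediate: if $f\in C_c^\infty(\Bb)$, each $f_{m,l}$ is supported in $[0,1]$, so by Paley--Wiener its Hankel transform $\wt{f}_{m,l}(\lambda)$ extends to an entire function of exponential type $\leq 1$; the factorization therefore forces $\wh{g}_{m,l}(\lambda_0)=0$ at every zero $\lambda_0$ of $j_{m+\frac{n}{2}-1}$. For sufficiency, given $g\in C_c^\infty(\Sn\times[0,2])$ satisfying the vanishing condition, I would define
\[
\wt{f}_{m,l}(\lambda) \;:=\; \frac{\wh{g}_{m,l}(\lambda)}{c_{n,m}\, j_{m+\frac{n}{2}-1}(\lambda)}.
\]
Since the real zeros of $j_{m+\frac{n}{2}-1}$ are simple and are cancelled by hypothesis, this quotient extends to an entire function. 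The support of $g$ in $t\in[0,2]$ gives $\wh{g}_{m,l}$ exponential type $\leq 2$, while the asymptotics $|j_{m+\frac{n}{2}-1}(i\tau)|\sim c\, e^{\tau}/\tau^{m+(n-1)/2}$ as $\tau\to+\infty$ imply that division lowers the type by exactly $1$. Combined with the rapid real-axis decay inherited from $g\in C_c^\infty$, the Paley--Wiener theorem for the Hankel transform yields $f_{m,l}\in C_c^\infty([0,1])$ whose Hankel transform equals $\wt{f}_{m,l}$. Reassembling $f(x) = \sum_{m,l} f_{m,l}(|x|)\, Y_{m,l}(x/|x|)$ and comparing spherical-harmonic-Hankel coefficients then gives $\Rc f = g$, with $f\in C_c^\infty(\Bb)$ once convergence of the series in $C_c^\infty$ is verified.

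The hardest step will be the sufficiency direction, specifically the quantitative Paley--Wiener bookkeeping for the quotient $\wh{g}_{m,l}/j_{m+\frac{n}{2}-1}$. One must prove lower bounds on $|j_{m+\frac{n}{2}-1}|$ on suitably chosen contours escaping to infinity that are strong enough to yield exponential type $\leq 1$ and polynomial decay on the real axis, and these bounds must be controlled uniformly in $m$ so that the reassembled series for $f$ converges in $C_c^\infty(\Bb)$ (using standard decay estimates on spherical harmonic coefficients of smooth functions). A secondary technical point is checking that every real zero of $j_{m+\frac{n}{2}-1}$ is actually cancelled: since $j_{m+\frac{n}{2}-1}(0)\neq 0$ and the remaining real zeros are simple, a removable-singularity argument applied coefficient-wise suffices once the vanishing hypothesis is invoked.
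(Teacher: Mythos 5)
The paper does not prove this statement: Theorem \ref{T2.4} is quoted from \cite{Agranovsky-Finch-Kuchment-range} and used as a black box in the sufficiency arguments for Theorems \ref{range} and \ref{Thm1.4}, so there is no in-paper proof to compare against. Judged on its own, your outline reproduces the standard argument from that line of literature. The factorization $\wh{g}_{m,l}(\lambda)=c_{n,m}\,j_{m+\frac{n}{2}-1}(\lambda)\,\wt{f}_{m,l}(\lambda)$ is correct, and is obtained most cleanly by writing $\wh{g}(p,\lambda)$ as an integral of $f$ against $j_{\frac{n}{2}-1}(\lambda|z-p|)$, applying the Gegenbauer addition theorem with $|p|=1$, and projecting with Funk--Hecke (your second, ``conceptual'' route); necessity then follows exactly as you say.

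Two cautions on sufficiency. First, this is precisely where the content of \cite{Agranovsky-Finch-Kuchment-range} lies: in the predecessor work the vanishing conditions were supplemented by moment conditions, and showing that division by $j_{m+\frac{n}{2}-1}$ lands you back in the Paley--Wiener class of exponential type $\leq 1$ with the correct decay, \emph{uniformly in $m$}, is the nontrivial step you have correctly flagged but not carried out; the required lower bounds on $|J_\nu|$ along contours avoiding its zeros, uniform in the order $\nu=m+\frac{n}{2}-1$, are delicate. Second, concluding $f\in C_c^\infty(\Bb)$ needs more than each $f_{m,l}$ being smooth and supported in $[0,1]$: you must also verify that $f_{m,l}(r)$ behaves like $r^m$ times a smooth even function near $r=0$, so that $f_{m,l}(|x|)Y_{m,l}(x/|x|)$ is smooth at the origin, and then establish convergence of the reassembled series in $C^\infty$. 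So the architecture is right and matches the known proof, with the genuine gaps confined to the quantitative Paley--Wiener bookkeeping --- which is exactly the part that makes the cited result a theorem rather than a remark.
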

We note that the moment conditions which were shown to be redundant for the range characterization of SMT for odd dimensions in \cite{Agranovsky-Kuchment-Quinto} were shown to be redundant in all dimensions in the aforementioned paper \cite{Agranovsky-Finch-Kuchment-range}. We will use this characterization to show the sufficiency of our range characterization in the sections below.


\section{Proof of main results}\label{sec:proofs} 
We will prove Theorem \ref{range} first. The proof for the general case follows as a consequence of the radial case. More specifically, we will prove the necessity part of Theorem \ref{range} first. As mentioned above, we need Theorem \ref{Elliptic_Integrals} from which the necessity part would follow immediately. For this reason, we begin with the proof of Theorem \ref{Elliptic_Integrals}. 
\subsection{An identity involving elliptic integrals; proof of Theorem \ref{Elliptic_Integrals}}
We note that for the left hand side integral in Theorem \ref{Elliptic_Integrals}, we use 
    \[
    P(t)=(a-t)(b-t)(c-t)(t-d)
    \]
    and for the right hand side integral, 
    \[
    P(t)=(a-t)(t-b)(t-c)(t-d).
    \]
We require a few preparatory lemmas which we give below.
    We define 
	\begin{align}\label{integrand}
	y(t) = \frac{P(t)}{t},\, t\neq 0.
	\end{align}
	The next lemma follows by a straightforward computation. We skip the proof. 
	\begin{lemma}\label{L2.1}
		The derivative of the function $y$ defined in \eqref{integrand} has the following zeros (and these are the only zeros): 
		\begin{align*}
			r_1 &= \frac{s^2 + u^2 +2}{6} - \frac{v^{1/2}}{6},\\
			r_2 &= \frac{s^2 + u^2 +2}{2} - \frac{w^{1/2}}{2},\\
			r_3 &= \frac{s^2 + u^2 +2}{6} + \frac{v^{1/2}}{6},\\
			r_4 &= \frac{s^2 + u^2 +2}{2} + \frac{w^{1/2}}{2},
			\intertext{where $v= (s^2 + u^2 +2)^2 + 12(1-u^2)(1-s^2)$, $w = 8(u^2 + s^2) + (u^2 - s^2)^2$.}
            \end{align*}
            Furthermore, 
            \[			r_1 < 0 < d \leq r_2 \leq  c \leq r_3 \leq b \leq r_4 \leq a.
		\]
	\end{lemma}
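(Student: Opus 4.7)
Since $P$ is a quartic with roots $a,b,c,d$ (all positive) and leading sign $-1$, I write
\[
P(t) = -(t-a)(t-b)(t-c)(t-d) = -t^4 + \sigma_1 t^3 - \sigma_2 t^2 + \sigma_3 t - \sigma_4,
\]
where $\sigma_j$ denotes the $j$th elementary symmetric polynomial in $a,b,c,d$. Then
\[
y(t) = \frac{P(t)}{t} = -t^3 + \sigma_1 t^2 - \sigma_2 t + \sigma_3 - \frac{\sigma_4}{t},
\]
so the critical points of $y$ coincide with the zeros of the quartic $Q(t) \coloneqq 3t^4 - 2\sigma_1 t^3 + \sigma_2 t^2 - \sigma_4$. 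My plan is to factor $Q$ explicitly into two quadratics, read off the four roots via the quadratic formula, and then use a four-point sign test at $t=a,b,c,d$ together with Rolle's theorem to establish the ordering.

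Using the pairings $a+d = 2(1+u^2)$, $b+c = 2(1+s^2)$, $ad = (1-u^2)^2$, $bc = (1-s^2)^2$, a direct computation gives $\sigma_1 = 2(s^2+u^2+2)$, $\sigma_4 = (1-u^2)^2(1-s^2)^2$, and the key algebraic identity
\[
\sigma_2 = \frac{\sigma_1^2}{4} + 2(1-u^2)(1-s^2).
\]
This identity is precisely what produces the clean factorization
\[
Q(t) = \left[3t^2 - \tfrac{\sigma_1}{2}\,t - (1-u^2)(1-s^2)\right]\left[t^2 - \tfrac{\sigma_1}{2}\,t + (1-u^2)(1-s^2)\right],
\]
which can be verified by direct multiplication. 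Solving each quadratic and substituting $\sigma_1/2 = s^2+u^2+2$ yields the four roots in exactly the form stated: the first factor gives $r_1, r_3$ with discriminant $v = (s^2+u^2+2)^2 + 12(1-u^2)(1-s^2)$, while the second gives $r_2, r_4$ with discriminant $w = (s^2+u^2+2)^2 - 4(1-u^2)(1-s^2) = 8(s^2+u^2) + (u^2-s^2)^2$.

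For the ordering, Vieta on the first factor gives $r_1 r_3 = -\tfrac{1}{3}(1-u^2)(1-s^2) < 0$ and $r_1 + r_3 > 0$, hence $r_1 < 0 < r_3$. To localize $r_2$ and $r_4$, I evaluate $q(t) \coloneqq t^2 - \tfrac{\sigma_1}{2}\,t + (1-u^2)(1-s^2)$ at $t = a,b,c,d$; after substituting each value and collecting terms, the common factor $u^2 - s^2$ pulls out cleanly to produce
\[
q(a) = 2(1+u)(u^2-s^2), \quad q(b) = -2(1+s)(u^2-s^2),
\]
\[
q(c) = -2(1-s)(u^2-s^2), \quad q(d) = 2(1-u)(u^2-s^2).
\]
Since $0 < s < u < 1$, we have $q(d), q(a) > 0$ and $q(b), q(c) < 0$; combined with the fact that $q$ opens upward with real roots $r_2 \le r_4$, this immediately forces $d \le r_2 \le c$ and $b \le r_4 \le a$. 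Finally, a sign inspection of $y = P/t$ on the positive real axis shows that $y$ vanishes at $d,c,b,a$, is positive on $(d,c) \cup (b,a)$, and negative on $(c,b)$; Rolle's theorem then yields a critical point of $y$ in $(c,b)$, and since $r_1, r_2, r_4$ have already been placed outside $[c,b]$, this critical point must be $r_3$.

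The main obstacle is purely bookkeeping: verifying the identity $\sigma_2 = \sigma_1^2/4 + 2(1-u^2)(1-s^2)$ and the four factored values of $q$ at $a,b,c,d$. These are routine polynomial manipulations with no conceptual difficulty, and the overall structure of the argument — factor the quartic into two quadratics, then deploy Vieta plus a four-point sign test — is clean and self-contained.
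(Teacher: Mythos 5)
Your proof is correct. The paper itself offers no argument here --- it dismisses the lemma as ``a straightforward computation'' and skips the proof --- so there is nothing to compare against; your write-up simply supplies that computation in a clean, verifiable form. The key observation, that the critical-point quartic $Q(t)=3t^4-2\sigma_1 t^3+\sigma_2 t^2-\sigma_4$ splits as
\[
\left(3t^2-\tfrac{\sigma_1}{2}t-(1-u^2)(1-s^2)\right)\left(t^2-\tfrac{\sigma_1}{2}t+(1-u^2)(1-s^2)\right)
\]
thanks to the identity $\sigma_2=\sigma_1^2/4+2(1-u^2)(1-s^2)$, checks out (I verified the identity, the cancellation of the linear term, and the constant term $k^2=\sigma_4$), and it delivers the stated $r_1,\dots,r_4$ with the correct discriminants $v$ and $w$. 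The localization is also sound: the sign pattern $q(d),q(a)>0$ and $q(b),q(c)<0$ for the upward parabola $q$ forces $d\le r_2\le c$ and $b\le r_4\le a$, Vieta on the first factor gives $r_1<0<r_3$, and the Rolle argument on $(c,b)$, where $y$ is negative between two zeros, pins down $r_3$ since the other three critical points have already been excluded from that interval. This is exactly the kind of self-contained verification the paper leaves to the reader.
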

	
    \begin{remark}\label{one-one}
    	An important consequence of the above lemma is that the function $y$ is one-one in the intervals $(d,r_2),(r_2,c),(b,r_4)$ and $(r_4,a)$. This can be seen as follows:\\
    	We have 
    	\begin{align*}
    		y'(t)=\frac{tP'(t)-P(t)}{t^2}= \frac{-3(t-r_1)(t-r_2)(t-r_3)(t-r_4)}{t^2}.
    	\end{align*}
        Since $r_1 < 0 < d \leq r_2 \leq  c \leq r_3 \leq b \leq r_4 \leq a$, 
        $y'> 0$ in $(d,r_2)$ and $(b,r_4)$, and $y'< 0$ in $(r_2,c)$ and $(r_4,a)$. Therefore $y$ is one-one in these four intervals. 
    	\end{remark}
	\begin{lemma}\label{maximum}
    The function $y$ defined in \eqref{integrand} attains the same maximum at $r_2$ and $r_4$, where $r_2$ and $r_4$ are as in Lemma \ref{L2.1}.
	\end{lemma}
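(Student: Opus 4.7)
The plan is to replace the explicit radical expressions for $r_2$ and $r_4$ by the quadratic they jointly satisfy, and then reduce $P(t)$ modulo that quadratic. From Lemma~\ref{L2.1} a direct computation gives $r_2+r_4 = s^2+u^2+2$ and $r_2 r_4 = (1-s^2)(1-u^2)$; setting $\mu := s^2+u^2+2$ and $K := (1-s^2)(1-u^2)$, the critical points $r_2,r_4$ are thus the two roots of
\[
q_2(t) := t^2 - \mu t + K.
\]
A parallel calculation shows that $r_1,r_3$ are the roots of $3t^2-\mu t - K$, which (matching leading coefficients) yields the factorisation $tP'(t)-P(t) = -(3t^2-\mu t-K)\,q_2(t)$. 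This fact is not strictly needed for the present lemma, but records the algebraic structure underlying the critical points.

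Next I would perform Euclidean division: write $P(t) = Q(t)\,q_2(t) + R(t)$ with $R(t) = pt+q$, $\deg R \le 1$. Since $q_2(r_i)=0$ for $i=2,4$, one obtains
\[
y(r_i) = \frac{P(r_i)}{r_i} = p + \frac{q}{r_i}, \qquad i=2,4.
\]
Because $r_2 \neq r_4$ and $r_2 r_4 = K \neq 0$, the desired equality $y(r_2)=y(r_4)$ is therefore equivalent to the single condition $q=0$, i.e.\ the remainder $R$ has no constant term.

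Finally I would verify $q=0$ by direct computation. Writing $P(t) = -(t^2-\alpha t+A)(t^2-\beta t + B)$ with $\alpha = a+d = 2+2u^2$, $\beta = b+c = 2+2s^2$, $A = ad = (1-u^2)^2$, $B = bc = (1-s^2)^2$ (so $\alpha+\beta = 2\mu$ and $AB = K^2$), the reduction modulo $q_2$ via $t^2 \equiv \mu t - K$ iterates to $t^3 \equiv (\mu^2-K)t-\mu K$ and $t^4 \equiv \mu(\mu^2-2K)t - K(\mu^2-K)$. Collecting the constant term yields
\[
q = K\bigl(\alpha\beta + A + B - \mu^2 - 2K\bigr),
\]
so it remains to check the symmetric-function identity $\alpha\beta + A + B = \mu^2 + 2K$. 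After expansion this collapses to the trivial relation $(1-u^2)(1-s^2) = 1-u^2-s^2+u^2s^2$, whence $q=0$ and $y(r_2)=y(r_4)$. Remark~\ref{one-one} then identifies this common critical value as the maximum of $y$ on $[d,c]\cup[b,a]$. The main (and essentially only) obstacle is the bookkeeping in the modular reduction; the conceptual key is recognising that $r_2,r_4$ are the roots of a single quadratic, which removes the nested radicals and turns the claim into an elementary polynomial identity.
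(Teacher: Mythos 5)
Your proof is correct. The paper's own argument is much terser: it simply asserts the two evaluations $P(r_2)=4(u^2-s^2)^2\,r_2$ and $P(r_4)=4(u^2-s^2)^2\,r_4$ (obtained, implicitly, by substituting the explicit radical formulas for $r_2,r_4$ and simplifying), from which $y(r_2)=y(r_4)=4(u^2-s^2)^2$ is immediate. Your route proves exactly the same underlying fact --- that the remainder of $P$ upon division by the quadratic $q_2(t)=t^2-\mu t+K$ satisfied by $r_2,r_4$ is purely linear, so $P(r_i)=p\,r_i$ --- but establishes it by modular reduction rather than by plugging in nested radicals. This is a genuinely cleaner verification: it reduces the claim to the single symmetric-function identity $\alpha\beta+A+B=\mu^2+2K$, which checks out (both sides expand to $6+2u^2+2s^2+4u^2s^2+u^4+s^4$), and as a bonus your linear coefficient $p=\alpha B+A\beta-2K\mu$ recovers the paper's constant $4(u^2-s^2)^2$ if one cares to compute it. Your handling of the "these are maxima" part via the sign analysis of $y'$ in Remark \ref{one-one} is also legitimate and arguably tidier than the paper's "follows by a straightforward calculation, we skip it." The auxiliary factorisation $tP'(t)-P(t)=-(3t^2-\mu t-K)\,q_2(t)$ is, as you note, not needed, but it is consistent with Lemma \ref{L2.1}. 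No gaps.
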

	\bpr 
    The fact that $r_2$ and $r_4$ are points of local maxima follows by a straightforward calculation. We skip it. The fact that the maxima are the same follows from the fact that 
    \[
    P(r_4)=4(u^2-s^2)^2 r_4,\quad{and}\quad
    P(r_2)=4(u^2-s^2)^2 r_2.
    \]
     \epr
    As was observed in Remark \ref{one-one}, $y(t)=\frac{P(t)}{t}$ is one-one in $(d,r_2)$, $(r_2,c)$, $(b,r_4)$ and $(r_4,a)$. Also, note that $y\geq 0$ in $(d,c)$ and $(b,a)$, and $y(d)=y(c)=y(b)=y(a)=0$. Using Lemma \ref{maximum}, the maxima attained at $r_2$ and $r_4$ in $(d,c)$ and $(b,a)$, respectively, are the same. Let us call the maximum value $\gamma$. Then, we have that $y$ is a bijective map from each of the intervals $(d,r_2)$, $(r_2,c)$, $(b,r_4)$ and $(r_4,a)$ onto $(0,\gamma)$.

    \begin{lemma}\label{Quartic poly}
    	Let $q \in (0,\gamma)$. Consider the polynomial $Q(t)= -P(t)+ qt$. Let $t_1, t_2, t_3$ and $t_4$ be the roots of $Q$, with $t_1$ and $t_4$ being the largest and the smallest, respectively. Then the roots satisfy the following relation:
    	\[
    	\sqrt{t_1}+\sqrt{t_4} = \sqrt{t_2}+\sqrt{t_3}.
    	\] 
    \end{lemma}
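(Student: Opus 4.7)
My plan is to substitute $x=\sqrt{t}$ and exploit a difference-of-squares factorization of $P(x^2)$. With $a=(1+u)^2$, $b=(1+s)^2$, $c=(1-s)^2$, $d=(1-u)^2$ and the auxiliary quantities $A:=x^2+1-u^2$, $B:=x^2+1-s^2$, the identities $\sqrt{a}+\sqrt{d}=\sqrt{b}+\sqrt{c}=2$ give
\[
(x^2-a)(x^2-d)=A^2-4x^2, \qquad (x^2-b)(x^2-c)=B^2-4x^2,
\]
and regrouping the four factors as $(A-2x)(B+2x)\cdot(A+2x)(B-2x)$, with $A-B=s^2-u^2$, yields
\[
P(x^2)=4x^2(u^2-s^2)^2-U(x)^2, \quad U(x):=x^4-(2+u^2+s^2)x^2+(1-u^2)(1-s^2).
\]
From the proof of Lemma \ref{maximum}, the common maximum of $y$ is $\gamma=4(u^2-s^2)^2$, so for $q\in(0,\gamma)$ the number $K:=\sqrt{4(u^2-s^2)^2-q}$ is real and strictly positive, and the equation $Q(t)=0$ (i.e.\ $P(x^2)=qx^2$) becomes $U(x)=\pm Kx$. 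This splits the eight roots of the even polynomial $P(x^2)-qx^2$ into the roots of two depressed quartics
\[
x^4-(2+u^2+s^2)x^2\mp Kx+(1-u^2)(1-s^2)=0,
\]
each lacking an $x^3$-term and therefore, by Vieta, having roots summing to zero.

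The next task is to determine which branch each positive root $x_i=\sqrt{t_i}$ lies on. A direct substitution in $U=AB-4x^2$ gives
\[
U(1\pm u)=+2(u^2-s^2)(1\pm u), \qquad U(1\pm s)=-2(u^2-s^2)(1\pm s),
\]
so at the boundary value $q=0$ (where $K=2(u^2-s^2)$), the $x$-values $1\pm u$ sit on the branch $U=+Kx$ while $1\pm s$ sit on $U=-Kx$. I extend this labeling to every $q\in(0,\gamma)$ by continuity: a positive root $x_i$ could switch branches only if $U(x_i)=0$, but combined with $U(x_i)=\pm Kx_i$ this would force either $K=0$ (excluded since $q<\gamma$) or $x_i=0$ (excluded since $Q(0)=abcd>0$). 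Hence throughout $(0,\gamma)$ the roots $x_1,x_4$ remain on $U=+Kx$ and $x_2,x_3$ on $U=-Kx$.

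Finally, since $U$ and $P(x^2)-qx^2$ are both even, the eight roots come in $\pm$ pairs $\{\pm x_i\}$, and each $-x_i$ lies on the branch opposite to $x_i$. Consequently the four roots of the quartic $U(x)=+Kx$ are precisely $\{x_1,\,x_4,\,-x_2,\,-x_3\}$, and Vieta gives
\[
x_1+x_4-x_2-x_3=0,
\]
i.e.\ $\sqrt{t_1}+\sqrt{t_4}=\sqrt{t_2}+\sqrt{t_3}$. The main obstacle is spotting the factorization $P(x^2)=4x^2(u^2-s^2)^2-U(x)^2$, which converts the desired square-root identity into the vanishing $x^3$-coefficient of a depressed quartic; once this is in hand, the branch bookkeeping at $q=0$ and the continuity argument are routine, and the conclusion is forced by Vieta's formulas.
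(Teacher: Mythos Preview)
Your proof is correct and takes a genuinely different route from the paper's.

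The paper argues by writing $Q(t)=t^4+a_3t^3+a_2t^2+a_1t+a_0$, invoking the explicit quartic formula via the resolvent cubic to express $t_1+t_2-(t_3+t_4)=2R$ and $t_1t_2+t_3t_4$ in terms of $R,\tilde D,E$, and then reducing the desired identity, through two successive squarings, to the purely algebraic relation
\[
a_3^2-4a_2+8\sqrt{a_0}=0,
\]
which is verified by direct substitution of $a=(1+u)^2,\ b=(1+s)^2,\ c=(1-s)^2,\ d=(1-u)^2$. The parameter $q$ enters only through $a_1$ and drops out of this final identity.

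Your argument instead linearizes the problem in $x=\sqrt t$: the key factorization $P(x^2)=4x^2(u^2-s^2)^2-U(x)^2$ (which hinges on $\sqrt a+\sqrt d=\sqrt b+\sqrt c=2$) splits the octic $P(x^2)=qx^2$ into two depressed quartics $U(x)=\pm Kx$, and the absence of an $x^3$-term gives the conclusion directly via Vieta once you have sorted the eight roots $\pm x_i$ onto the two branches. The branch bookkeeping by evaluating $U$ at $1\pm u,\,1\pm s$ and the continuity argument (no switching since $Kx_i\neq0$) are clean; the needed continuity of the simple roots $t_i(q)$ is supplied by Remark~\ref{one-one}, which places them in four disjoint intervals for every $q\in(0,\gamma)$.

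What each approach buys: the paper's computation makes transparent that the crucial fact is the single $q$-independent identity $a_3^2-4a_2+8\sqrt{a_0}=0$, while your factorization is shorter, avoids the resolvent cubic entirely, and explains structurally \emph{why} the square-root identity holds---it is literally the vanishing $x^3$-coefficient of a depressed quartic. Both proofs ultimately exploit the same special feature of $a,b,c,d$ (namely $\sqrt a+\sqrt d=\sqrt b+\sqrt c$), but yours does so at the outset rather than at the end.
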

    
    \begin{proof}
    	Let us rewrite $Q$ as
    	\begin{align*}
    	Q(t) &= t^4+ a_3t^3+ a_2t^2+ a_1t+ a_0, \intertext{with}
    	a_0 &= abcd,\\ 
    	a_1-q &= -(abc+abd+acd+bcd),\\
        a_2 &= ab+ac+ad+bc+bd+cd,\\
        a_3 &= -(a+b+c+d).
    	\end{align*}
    	Note that since $q\in (0,\gamma)$, $t_1, t_2, t_3, t_4 \geq d > 0$, where we recall that $d=(1-u)^2$.  
    	We need to show 
    	\begin{align*}
    		\sqrt{t_1}+\sqrt{t_4} = \sqrt{t_2}+\sqrt{t_3}.
            \end{align*}
            Equivalently, 
    \begin{align*} \sqrt{t_1}-\sqrt{t_2} = \sqrt{t_3}-\sqrt{t_4}.
    \end{align*}
    This is true if and only if 
    \begin{align*}
    		(\sqrt{t_1}-\sqrt{t_2})^2 - (\sqrt{t_3}-\sqrt{t_4})^2 = 0.
    	\end{align*}
    	The last but one equivalence  above follows from the fact that $t_1 \geq t_2$ and $t_3 \geq t_4$.
    	Expanding, we get, 
    	\begin{align} \label{relation}
    	(\sqrt{t_1}-\sqrt{t_2})^2 - (\sqrt{t_3}-\sqrt{t_4})^2 = t_1+t_2 - (t_3+t_4) - 2\sqrt{t_1t_2} +2\sqrt{t_3t_4}.
    	\end{align}
        For a quartic polynomial of the form $t^4+ a_3t^3+ a_2t^2+ a_1t+ a_0$, its roots are given as solutions of the following two quadratic equations
        \[
        v^2+ \left[ \frac{a_3}{2} \mp \left( \frac{a_3^2}{4}+ u_1 -a_2 \right)^{1/2} \right]v + \frac{u_1}{2} \mp \left[\left(\frac{u_1}{2}\right)^2 -a_0\right]^{1/2} = 0,
        \]
        where $u_1$ is a real root of the cubic equation 
        \[
        u^3-a_2u^2+ (a_1a_3- 4a_0)u- (a_1^2+ a_0a_3^2- 4a_0a_2) = 0.
        \]
        See \cite[p.17]{Abramowitz-Stegun} for more details.
        Thus we have
        \begin{align*}
        	t_1&= -\frac{a_3}{4} + \frac{R}{2} + \frac{\tilde{D}}{2},\quad 
        	t_2= -\frac{a_3}{4} + \frac{R}{2} - \frac{\tilde{D}}{2},\\
        	t_3&= -\frac{a_3}{4} - \frac{R}{2} + \frac{E}{2},\quad
        	t_4= -\frac{a_3}{4} - \frac{R}{2} - \frac{E}{2},   
        	\intertext{where}
        	R&= \left( \frac{a_3^2}{4}+ u_1 -a_2 \right)^{1/2},\\
    	    \tilde{D}&=	\left[ \frac{a_3^2}{4}+ R^2 -Ra_3- 2u_1+ 4\left(\frac{u_1^2}{4}- a_0 \right)^{1/2} \right]^{1/2},\\
        	E&= \left[ \frac{a_3^2}{4}+ R^2 +Ra_3- 2u_1- 4\left(\frac{u_1^2}{4}- a_0 \right)^{1/2} \right]^{1/2}.
        \end{align*}
        This gives
        \[
        t_1+t_2 - (t_3+t_4) = 2R.
        \]
        Using this in \eqref{relation}, the condition that we need to show is 
        \begin{align*}
        &\sqrt{t_1t_2} -\sqrt{t_3t_4} = R\\
        \Longleftrightarrow& (\sqrt{t_1t_2} -\sqrt{t_3t_4})^2 = R^2.
        \end{align*}
        This again uses the fact that $t_1\geq t_3, t_2\geq t_4$.\\
        We have
        \begin{align*}
        (\sqrt{t_1t_2} -\sqrt{t_3t_4})^2 &= t_1t_2+ t_3t_4-  2\sqrt{t_1t_2t_3t_4}= t_1t_2+ t_3t_4- 2\sqrt{a_0},\\
        t_1t_2+ t_3t_4 &= \frac{a_3^2}{8}+ \frac{R^2}{2}- \frac{\tilde{D}^2+E^2}{4}.
        \end{align*}
        Thus it is reduced to showing
        \[
        a_3^2 - 4R^2 - 2(\tilde{D}^2+E^2) - 16\sqrt{a_0} = 0.
        \]
        Also,
        \[
        2(\tilde{D}^2+E^2) = a_3^2 +4R^2 -8u_1.
        \]
        Then,
        \begin{align*}
        	&a_3^2 - 4R^2 - 2(\tilde{D}^2+E^2) - 16\sqrt{a_0}\\
        	=\,& a_3^2 - 4R^2 - (a_3^2 +4R^2 -8u_1) - 16\sqrt{a_0}\\
        	=\,& -2a_3^2 +8a_2 -16\sqrt{a_0}. 
        \end{align*}
        We finally need to show 
        \[
        a_3^2 -4a_2 +8\sqrt{a_0} = 0.
        \]
        Note that this is independent of $a_1$, which is the only coefficient depending on $q$.\\
        We have
        \begin{align*}
        	&a_3^2 -4a_2 +8\sqrt{a_0}\\
        	=\,& (a+b+c+d)^2 -4(ab+ac+ad+bc+bd+cd) +8\sqrt{abcd}\\
        	=\,& a^2+b^2+c^2+d^2 -2(ab+ac+ad+bc+bd+cd) +8\sqrt{abcd}\\
        	=\,& (a-d)^2 + (b-c)^2 -2(a+d)(b+c) +8\sqrt{abcd}\\
        	=\,& 16(u^2+s^2) -8(1+u^2)(1+s^2) +8(1-u^2)(1-s^2)\\
        	=\,& 0.
        \end{align*}
        This completes the  proof.
    \end{proof}

    \bpr[Proof of Theorem \ref{Elliptic_Integrals}] For $\B=0$, this result is obviously true by straightforward integration. 

    Next for $\B>0$, we first consider:
    	\begin{align}
    		\label{EqI1} I_1&:= \int\limits_d^c \frac{P^{\B}}{t^{\B}\sqrt{t}}\D t\\
           \notag &= 2\int\limits_d^c (\sqrt{t})' \frac{P^{\B}}{t^{\B}}\D t. 
    	\end{align}
    	Integrating by parts, we get,
    	\begin{align*}
    		I_1&= -2\B\int\limits_d^c \sqrt{t} \left(\frac{P}{t}\right)^{\B-1}\left(\frac{P}{t}\right)'\D t\\
    		&= -2\B\int\limits_d^{r_2} \sqrt{t} \left(\frac{P}{t}\right)^{\B-1}\left(\frac{P}{t}\right)'\D t -2\B\int\limits_{r_2}^c \sqrt{t} \left(\frac{P}{t}\right)^{\B-1}\left(\frac{P}{t}\right)'\D t,
    	\end{align*}
        where $r_2$ is as in Lemma \ref{L2.1}.
        We know that $y(t)=\frac{P(t)}{t}$ is a bijective map from $(d,r_2)$ and $(r_2,c)$ onto $(0,\gamma)$. Let $\psi_1$ and $\psi_2$ denote the inverses of $y$ in $(d,r_2)$ and $(r_2,c)$, respectively. Then making the change of variable $y=\frac{P(t)}{t}$, we get,
    	\begin{align*}
    		I_1&= -2\B\int\limits_0^{\gamma} \sqrt{\psi_1} y^{\B-1} \D y + 2\B\int\limits_0^{\gamma} \sqrt{\psi_2} y^{\B-1} \D y\\
    		&= -2\B\int\limits_0^{\gamma} (\sqrt{\psi_1}- \sqrt{\psi_2}) y^{\B-1} \D y.
    	\end{align*} 
    	Similarly, denoting $\psi_3$ and $\psi_4$ as the inverses of $y$ in $(b,r_4)$ and $(r_4,a)$, respectively, we have,
    		\begin{align*}
    		I_2&:= \int\limits_b^a \frac{P^{\B}}{t^{\B}\sqrt{t}}\D t
    		= -2\B\int\limits_0^{\gamma} (\sqrt{\psi_3}- \sqrt{\psi_4}) y^{\B-1} \D y.
    	\end{align*}  
    	Note that $\psi_1 \leq \psi_2 \leq \psi_3 \leq \psi_4$.\\
    	To show $I_1 - I_2 =0$, it is sufficient to show that $\sqrt{\psi_1}- \sqrt{\psi_2}- \sqrt{\psi_3}+ \sqrt{\psi_4} \equiv 0$ in $(0,\gamma)$. More precisely, for any $y_0 \in (0,\gamma)$, we consider the quartic polynomial $P(t)- y_0t$, and we need to show that the sum of square roots of its smallest and largest roots is same as the sum of square roots of its other two roots. This is what we proved in Lemma \ref{Quartic poly}. 
        
        For $\B=-1/2$,  we require a  separate analysis.  In this case, we get a Schwartz-Christoffel integral and the result is true by invoking a suitable fractional linear transformation; see \cite[Page 96]{BF}.  We note that for this case we do not need $a,b,c,d$ to be of the form \eqref{Eqss4.7}. This completes the proof of the theorem for the case $\B=-1/2$ and $\B\geq 0$. 
        
        As stated in the main theorem, the result is, in fact, true for all real $\B>-1$. This can be seen by making a change of variable $\frac{P}{t}=y$ to the integral $I_1$ in \eqref{EqI1} without invoking integration by parts. We skip this proof, since we only require the equality of elliptic integrals for $\B=-1/2$ in $(-1,0)$. 
    	\end{proof}
\subsection{Proof of necessity in Theorem \ref{range}}
We can finally give the proof of necessity in the range characterization results stated above. 
\bpr[Proof of necessity in Theorem \ref{range}]
    	Recall that for $0\leq t\leq 2$ and $\A = \frac{n-2}{2}$, applying Funk-Hecke theorem, we have,  
    	\Beq\label{FH}
    	h(t)=\frac{\o_{n-1}}{\o_{n-2} 2^{n-3} C_m^{\A}(1)}\int\limits_{|1-t|}^{1} uf(u)\Big{\{}\left[(1+u)^2-t^2\right]\left[t^2-(1-u)^2\right]\Big{\}}^{\A-\frac{1}{2}} \D u.
    	\Eeq
    	We need to show that for each $0<s<1$, 
    	\Beq\label{RC}
    	\int\limits_{0}^{1-s} \frac{th(t)}{t^{2\A}} \Big{\{}\left[(1+t)^2-s^2\right]\left[(1-t)^2 - s^2\right]\Big{\}}^{\A- \frac{1}{2}} \D t = \int\limits_{1+s}^{2} \frac{th(t)}{t^{2\A}} \Big{\{}\left[(1+t)^2-s^2\right]\left[(1-t)^2 - s^2\right]\Big{\}}^{\A- \frac{1}{2}} \D t.
    	\Eeq    
    	We note that 
    	\[
    	\left[(1+t)^2-s^2\right]\left[(1-t)^2-s^2\right]=\left[(1-s)^2-t^2\right]\left[(1+s)^2-t^2\right]=\left[t^2-(1-s)^2\right]\left[t^2-(1+s)^2\right].
    	\]
    	Using expression \eqref{FH} for $h$, we can rewrite \eqref{RC} as 
    	\begin{align*}
    		&\int\limits_{0}^{1-s} \int\limits_{1-t}^{1} \frac{tuf(u)}{t^{2\A}} \Big{\{}\left[(1+u)^2-t^2\right]\left[t^2-(1-u)^2\right]\left[(1+s)^2 - t^2 \right]\left[(1-s)^2 - t^2 \right]\Big\} ^{\A- \frac{1}{2}} \D u\, \D t \\
    		=& \int\limits_{1+s}^{2} \int\limits_{t-1}^{1} \frac{tuf(u)}{t^{2\A}} \Big{\{}\left[(1+u)^2-t^2\right]\left[t^2-(1-u)^2\right]\left[(1+s)^2 - t^2 \right] \left[(1-s)^2 - t^2 \right]\Big\} ^{\A- \frac{1}{2}} \D u\, \D t.\\
    	\end{align*}
    	Changing the order of integration, we get,
    	\begin{align*}
    		&\int\limits_{s}^{1} \int\limits_{1-u}^{1-s} \frac{tuf(u)}{t^{2\A}} \Big{\{}\left[(1+u)^2-t^2\right]\left[t^2-(1-u)^2\right]\left[(1+s)^2 - t^2 \right] \left[(1-s)^2 - t^2 \right]\Big\} ^{\A- \frac{1}{2}} \D t \D u \\
    		=& \int\limits_{s}^{1} \int\limits_{1+s}^{1+u} \frac{tuf(u)}{t^{2\A}} \Big{\{}\left[(1+u)^2-t^2\right]\left[t^2-(1-u)^2\right]\left[t^2- (1+s)^2\right] \left[t^2- (1-s)^2 \right]\Big\} ^{\A- \frac{1}{2}} \D t \D u.\\
    	\end{align*} 
    	Thus the necessity part of the theorem would be done if we show that for each $0 < s < u < 1$, 
        \Beq
    	\begin{aligned}
    		\label{LHS-Integral} & \int\limits_{(1-u)^2}^{(1-s)^2} \frac{1}{t^{\A}} \Big{\{}\left[(1+u)^2-t\right]\left[t-(1-u)^2\right]\left[(1+s)^2 - t \right] \left[(1-s)^2 - t \right]\Big\} ^{\A- \frac{1}{2}} \D t \\
    		=&\int\limits_{(1+s)^2}^{(1+u)^2} 
    		\frac{1}{t^{\A}} \Big{\{}\left[(1+u)^2-t\right]\left[t-(1-u)^2\right]\left[t-(1+s)^2 \right] \left[t-(1-s)^2 \right]\Big\} ^{\A- \frac{1}{2}} \D t.
    	\end{aligned}
        \Eeq
        For ease of notation, let us denote
	\begin{align*}
	P(t) = (a-t)(t-b)(t-c)(t-d) = (a-t)(b-t)(c-t)(t-d),
    \end{align*}
	where 
    \Beq\label{Eqss4.7}
    a=(1+u)^2,\, b=(1+s)^2,\, c=(1-s)^2,\, d=(1-u)^2 \mbox{ with }  0<s<u<1.
	\Eeq
    Then \eqref{LHS-Integral} is equivalent to showing that, for each integer $\A\geq 0$, 
    \Beq\label{1.5}
    \int\limits_{d}^{c} \lb \frac{P(t)}{t}\rb^{\A}\frac{1}{\sqrt{P(t)}} \D t  = \int\limits_{b}^{a} \lb \frac{P(t)}{t}\rb^{\A}\frac{1}{\sqrt{P(t)}} \D t. 
    \Eeq
    This is what we proved in Theorem \ref{Elliptic_Integrals} for $\A\geq 0$.
    This concludes the proof of necessity in Theorem \ref{range}.
        \epr
Our next goal is to prove the sufficiency part of Theorem \ref{RC-Radial}. We first derive a new Nicholson-type cross-product identity for Bessel functions. This Nicholson-type identity will be used in the sufficiency part of Theorem \ref{RC-Radial}.

\subsection{A new Nicholson-type cross product formula for Bessel functions; proof of Theorem \ref{Nicholson-type formula}}
\bpr[Proof of Theorem \ref{Nicholson-type formula}] 
Our strategy of proof is similar to the one in \cite{Hrycak-Schmutzhard}. For the sake of completeness, we will give a complete proof. We show by Lemmas \ref{L4.2} and \ref{L4.3} below that  $y(z,w)$, for each fixed $w$, defined in \eqref{Eqss3.1} satisfies the Bessel differential equation,
\[
zy''(z,w)+(2\A+1) y'(z,w)+zy(z,w)=0.
\]
Hence 
\[
y(z,w)= c_1(w) j_{\A}(z)+c_2(w) y_{\A}(z).
\]
We also note that $y(z,w)=-y(w,z)$. Therefore 
\[
c_1(z) j_{\A}(w)+c_2(z) y_{\A}(w)=-c_1(w) j_{\A}(z)-c_2(w) y_{\A}(z)
\]
We choose complex numbers $w_1$ and $w_2$ such that $j_{\A}(w_1) y_{\A}(w_2)-y_{\A}(w_1) j_{\A}(w_2)\neq 0$. Then there exist constants $a_{ij}$ for $1\leq i,j\leq 2$ depending on $w_1, w_2$ and $\A$ such that 
\[
y(z,w)=a_{11} j_{\A}(w) j_{\A}(z)+ a_{21} y_{\A}(w) j_{\A}(z)+ a_{12} j_{\A}(w) y_{\A}(z) + a_{22}y_{\A}(w) y_{\A}(z).
\]
Using the fact that $y(z,w)=-y(w,z)$ combined with the linear independence of $j_{\A}$ and $y_{\A}$, we have that $a_{11}=a_{22}=0$ and $a_{12}=-a_{21}$.  We then have 
\[
a_{12}\lb j_{\A}(w) y_{\A}(z)-y_{\A}(w) j_{\A}(z)\rb=\int\limits_{C} j_{\A}(\sqrt{z^2+w^2-2zw\cosh \zeta}) \sinh^{2\A}\zeta \D \zeta-\sum\limits_{j=0}^{\A-1} a_{j} D_{s}^{j} f(0).
\]
The proof would be complete once we determine $a_{12}$. For this, we first consider the case $\alpha>0$. To determine $a_{12}$ in this case, let us choose $r>0$ and $z=rw$. We then multiply both sides by $r^{\A} w^{2\A}$ and then we let $0<w\to 0$ in both
\[
r^{\A} w^{2\A}\lb j_{\A}(w) y_{\A}(rw)-y_{\A}(w) j_{\A}(rw)\rb 
\]
and 
\[
r^{\A} w^{2\A}\int\limits_{C} j_{\A}(\sqrt{z^2+w^2-2zw\cosh \zeta}) \sinh^{2\A}\zeta \D \zeta-r^{\A} w^{2\A}\sum\limits_{j=0}^{\A-1} a_{j} D_{s}^{j} f(0).
\]
We have, based on \eqref{D_s derivative},   Lemma \ref{Lem4.1} and \eqref{Eqss4.1},
\[
\begin{aligned}
&\lim\limits_{w\to 0}r^{\A} w^{2\A}\Bigg{\{}\int\limits_{0}^{\ln r} j_{\A}(\sqrt{r^2w^2+w^2-2rw^2\cosh \zeta}) \sinh^{2\A}\zeta \D \zeta-\sum\limits_{j=0}^{\A-1} a_{j} D_{s}^{j} f(0)\Bigg{\}}\\
&=\frac{(-1)^{\A} (\A-1)!2^{\A}}{4}{2\A \choose \A}\lb r^{\A}-r^{-\A}\rb.
\end{aligned} 
\]
Next for the case $\A=0$, only the integral expression in \eqref{Eqss3.1} is present and in this case, we have 
\[
\lim\limits_{w\to 0}\int\limits_0^{\ln r} j_{0}(\sqrt{r^2w^2+w^2-2rw^2\cosh \zeta} \D \zeta =j_0(0)\ln r=\ln r.
\]
Similarly we compute the limit, 
\[
\lim\limits_{w\to 0}r^{\A} w^{2\A} \lb j_{\A}(w)y_{\A}(rw)-y_{\A}(w) j_{\A}(rw)\rb.
\]
We note that 
\[
r^{\A} w^{2\A} \lb j_{\A}(w)y_{\A}(rw)-y_{\A}(w) j_{\A}(rw)\rb= C(\A)\lb J_{\A}(w)Y_{\A}(rw)-Y_{\A}(w)J_{\A}(rw)\rb,
\]
where $C(\A)$ is a non-zero constant. We have, from  \cite[Lemma A.2]{Hrycak-Schmutzhard}, 
\[
\lim\limits_{w\to 0}J_{\A}(w)Y_{\A}(rw)-Y_{\A}(w)J_{\A}(rw)=
\begin{cases}
    \frac{1}{\pi \A} \lb r^{\A}-r^{-\A} \rb\mbox{ for } \alpha >0\\
    \frac{2}{\pi} \ln r \mbox{ for } \A=0.
\end{cases}
\]
Hence in both the cases $\A>0$ and $\A=0$, we have that $a_{12}$ is a non-zero constant depending only on $\A$. 
\epr 
\begin{lemma}\label{L4.2}
For $\A\geq 0$, consider the integral, 
\[
\tilde{y}(z,w)=\int\limits_{C} j_{\A}(\sqrt{z^2+w^2-2zw\cosh \zeta}) \sinh^{2\A}(\zeta) \D \zeta.
\]
Then for each fixed $w$ with $\mbox{arg}|w|<\pi$, $\tilde{y}$ satisfies the non-homogeneous ODE: 
\[
\begin{aligned} 
(z\tilde{y}')'(z) +2\A \tilde{y}'(z)+z\tilde{y}(z)=\frac{\A }{2^{2\A-2}w^{2\A}}\lb \frac{z^2-w^2}{z}\rb^{2\A-1}.
\end{aligned}
\]
\end{lemma}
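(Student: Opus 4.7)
The plan is to reduce $L_z := z\partial_z^2 + (2\A+1)\partial_z + z$ applied to the integrand $j_\A(u)$, with $u = \sqrt{z^2+w^2-2zw\cosh\zeta}$, to a total $\zeta$-derivative and then integrate, carefully handling the $z$-dependent upper limit $\zeta_*(z):=\ln(z/w)$. From $u^2 = z^2+w^2-2zw\cosh\zeta$ one directly verifies the two algebraic identities
\[
u_z^2 - \frac{u_\zeta^2}{z^2} = 1, \qquad u_{zz} - \frac{u_{\zeta\zeta}}{z^2} = \frac{w\cosh\zeta}{zu}.
\]
Combining these with the chain rule and the normalized Bessel equation $uj_\A''(u)+(2\A+1)j_\A'(u)+uj_\A(u)=0$, I would first obtain $L_z[j_\A(u)] = \tfrac{1}{z}\partial_\zeta^2 j_\A(u) - \tfrac{2\A w\cosh\zeta}{u}\,j_\A'(u)$. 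Multiplying through by $\sinh^{2\A}\zeta$ and using $u_\zeta = -zw\sinh\zeta/u$, so that $j_\A'(u)/u = -\partial_\zeta j_\A(u)/(zw\sinh\zeta)$, absorbs the remainder into the first term and yields the key compact identity
\[
L_z[j_\A(u)]\,\sinh^{2\A}\zeta = \frac{1}{z}\,\partial_\zeta\!\bigl[\sinh^{2\A}\zeta\, \partial_\zeta j_\A(u)\bigr].
\]

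Since $\zeta_*'(z)=1/z$ and $\zeta_*''(z)=-1/z^2$, differentiating $\tilde y(z,w) = \int_0^{\zeta_*(z)}\phi(z,\zeta)\, d\zeta$ with $\phi := j_\A(u)\sinh^{2\A}\zeta$ twice via the Leibniz rule and collecting the contributions from the moving endpoint gives
\[
L_z\tilde y = \int_0^{\zeta_*}L_z[\phi]\, d\zeta + \frac{2\A}{z}\,\phi(z,\zeta_*) + 2\,\phi_z(z,\zeta_*) + \frac{1}{z}\,\phi_\zeta(z,\zeta_*).
\]
By the key identity, the interior integral equals $\tfrac{1}{z}[\sinh^{2\A}\zeta\,\partial_\zeta j_\A(u)]_0^{\zeta_*}$; the contribution at $\zeta=0$ is zero because both $\sinh^{2\A}\zeta$ and $\phi_\zeta$ vanish there.

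To close the proof, I would evaluate the four remaining boundary quantities at $\zeta_*$ using $\sinh\zeta_* = (z^2-w^2)/(2zw)$, $\cosh\zeta_* = (z^2+w^2)/(2zw)$, $u\to 0$, $j_\A(0)=1$, and the crucial limit $j_\A'(u)/u \to -1/[2(\A+1)]$ extracted from the Taylor series of $j_\A$ at the origin. All four pieces share the common denominator $(\A+1)\,2^{2\A+2}\,z^{2\A+1}w^{2\A}$; once they are summed, the terms proportional to $(z^2-w^2)^{2\A+1}$ cancel ($1-2+1=0$) among the contributions from the interior integral, $\phi_z$, and the leading part of $\phi_\zeta$, and what remains combines via $(z^2-w^2)+(z^2+w^2)=2z^2$ to give exactly $\frac{\A}{2^{2\A-2}w^{2\A}}\bigl(\tfrac{z^2-w^2}{z}\bigr)^{2\A-1}$, which is the claimed right-hand side. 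The principal obstacle is this last bookkeeping step: the four boundary contributions are structurally distinct rational expressions in $(z^2-w^2)^{2\A-1}(z^2+w^2)$, $(z^2-w^2)^{2\A}$, and $(z^2-w^2)^{2\A+1}$, and one has to track the $(\A+1)$-factors coming from the expansion of $j_\A'(u)/u$ near $u=0$ precisely in order to see the top-degree pieces cancel before the announced source term emerges.
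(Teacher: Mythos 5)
Your argument is correct, and I checked both pillars: the algebraic identities $u_z^2-u_\zeta^2/z^2=1$ and $u_{zz}-u_{\zeta\zeta}/z^2=\tfrac{w\cosh\zeta}{zu}$ hold, they combine with the normalized Bessel equation to give the key identity $L_z[j_\A(u)]\sinh^{2\A}\zeta=\tfrac{1}{z}\,\partial_\zeta\bigl[\sinh^{2\A}\zeta\,\partial_\zeta j_\A(u)\bigr]$ (note $(z\tilde y')'+2\A\tilde y'+z\tilde y=z\tilde y''+(2\A+1)\tilde y'+z\tilde y$, so your $L_z$ is the paper's operator), and the endpoint evaluation at $\zeta_\ast=\ln(z/w)$ with $\sinh\zeta_\ast=\tfrac{z^2-w^2}{2zw}$, $\cosh\zeta_\ast=\tfrac{z^2+w^2}{2zw}$, $j_\A(0)=1$, $j_\A'(u)/u\to-\tfrac{1}{2(\A+1)}$ does produce the $1-2+1$ cancellation of the $(z^2-w^2)^{2\A+1}$ pieces, leaving $\tfrac{2\A}{z}\sinh^{2\A-1}\zeta_\ast\,(\sinh\zeta_\ast+\cosh\zeta_\ast)=\tfrac{\A}{2^{2\A-2}w^{2\A}}\bigl(\tfrac{z^2-w^2}{z}\bigr)^{2\A-1}$, as claimed. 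In substance this is the paper's route — Leibniz rule at the moving endpoint, the Bessel ODE, one integration by parts in $\zeta$ — but organized differently: you prove a pointwise total-$\zeta$-derivative identity for the integrand up front, so the interior integral collapses at once and all the work sits in four boundary quantities, three of which carry the limit $j_\A'(u)/u\to-1/(2(\A+1))$ and must cancel against each other. The paper instead substitutes $u(\xi)=j_\A(\sqrt{\xi})$, assembles the operator applied to $\tilde y$, and integrates by parts only the $u''$-term; in that arrangement the $u'(0)$-boundary contributions cancel identically and the source term emerges directly as $\tfrac{1}{z}\bigl(g'(\zeta_\ast)+2\A\,g(\zeta_\ast)\bigr)$ with $g=\sinh^{2\A}$, so the value of $j_\A'(u)/u$ at $u=0$ is never needed. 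Your version costs slightly more endpoint bookkeeping but buys a cleaner structural statement (the integrand is annihilated by $L_z$ up to an exact $\zeta$-derivative); in a final write-up just phrase the vanishing at the fixed endpoint $\zeta=0$ carefully, since for $\A=0$ it comes from $\partial_\zeta j_0(u)\propto\sinh\zeta$ rather than from the factor $\sinh^{2\A}\zeta$.
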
 
\bpr Our strategy of proof is similar to the one in \cite{Hrycak-Schmutzhard}. 
We note that $j_{\A}$ satisfies the ODE, 
\[
xj_{\A}''(x)+(2\A+1) j_{\A}'(x)+ xj_{\A}(x)=(x j'_{\A})'(x) +2\A j_{\A}'(x)+ x j_{\A}(x)=0
\]
Define 
\[
u(\xi)=j_{\A}(\sqrt{\xi}).
\]
Then $u$ satisfies the ODE,
\[
4\xi u''(\xi)+4(\A+1)u'(\xi)+u(\xi)=0.
\]
We let $\phi=z^2+w^2-2zw \cosh \zeta$ and $\sinh^{2\A} \zeta = g(\zeta)$ to simplify notation.
We have 
\[
\begin{aligned} 
z\tilde{y}'(z,w)=\int\limits_{C} u'(\phi)(2z^2-2zw\cosh \zeta) g(\zeta) \D \zeta + u(0)g(\log z - \log w).
\end{aligned} 
\]
Differentiating, we get, 
\[
\begin{aligned}
    (z\tilde{y}')'(z,w)&=\int\limits_{C} u''(\phi)(2z-2w \cosh \zeta)(2z^2-2zw \cosh \zeta) g(\zeta) \D \zeta\\
    &+\int\limits_{C} u'(\phi)(4z-2w \cosh\zeta) g(\zeta) \D \zeta +u'(0)(z^2-w^2)g(\log z-\log w)\frac{1}{z}\\
    &+ u(0)g'(\log z - \log w) \frac{1}{z}.
\end{aligned}
\]
This can be rewritten as 
\[
\begin{aligned}
     (z\tilde{y}')'(z,w)&=\int\limits_{C} u''(\phi)(4z^3-8z^2w \cosh \zeta+4zw^2\cosh^2 \zeta)g(\zeta) \D \zeta\\
    &+\int\limits_{C} u'(\phi)(4z-2w \cosh\zeta) g(\zeta) \D \zeta \\
    &+u'(0)(z^2-w^2)g(\log z-\log w)\frac{1}{z}+ u(0)g'(\log z - \log w) \frac{1}{z}.
\end{aligned}
\]
Next 
\[
\begin{aligned} 
(z\tilde{y}')'(z,w) +2\A \tilde{y}'(z,w)&+z\tilde{y}(z,w)=\int\limits_{C} u''(\phi)(4z^3-8z^2w \cosh \zeta+4zw^2\cosh^2 \zeta)g(\zeta) \D \zeta\\
    &+\int\limits_{C} u'(\phi)(4z(1+\A)-2(1+2\A)w \cosh\zeta) g(\zeta) \D \zeta \\
    &+\int\limits_{C} z u(\phi)g(\zeta) \D \zeta+u'(0)(z^2-w^2)g(\log z-\log w)\frac{1}{z}\\
    &+ u(0)g'(\log z - \log w) \frac{1}{z}+2\A u(0)g(\log z-\log w)\frac{1}{z}.
\end{aligned} 
\]
Recall that $u$ satisfies the following ODE: 
\[
4z u''(z)+ 4(\A+1)u'(z)+ u(z) =0.
\]
With this, we have, 
\[
\begin{aligned} 
(z\tilde{y}')'(z,w) +2\A \tilde{y}'(z,w)&+z\tilde{y}(z,w)=\int\limits_{C} u''(\phi)(4zw^2\sinh^2 \zeta)g(\zeta) \D \zeta\\
    &-\int\limits_{C} u'(\phi)(2(1+2\A)w \cosh\zeta) g(\zeta) \D \zeta \\
    &+u'(0)(z^2-w^2)g(\log z-\log w)\frac{1}{z}\\
    &+ u(0)g'(\log z - \log w) \frac{1}{z}+2\A u(0)g(\log z-\log w)\frac{1}{z}.
\end{aligned} 
\]

Considering the integral,
\[
I_1=\int\limits_{C} u''(\phi) (4zw^2\sinh^2 \zeta) g(\zeta) \D \zeta,
\]
and integrating by parts, we have,
\[
I_1=\int\limits_{C} u'(\phi) \lb 2w \sinh \zeta g(\zeta)\rb' \D \zeta  -u'(0)(z^2-w^2) g(\log z -\log w)\frac{1}{z}.
\]
Then we have 
\[
\begin{aligned} 
(z\tilde{y}')'(z,w) +2\A \tilde{y}'(z,w)+z\tilde{y}(z,w)&=\int\limits_{C} u'(\phi)\lb 2w\sinh \zeta)g(\zeta)\rb'  \D \zeta\\
    &-2(1+2\A)\int\limits_{C} u'(\phi)w \cosh\zeta g(\zeta) \D \zeta \\
    & + u(0)g'(\log z - \log w) \frac{1}{z}+2\A u(0)g(\log z-\log w)\frac{1}{z}.
\end{aligned} 
\]
Now substituting $g(\zeta)=\sinh^{2\A} \zeta$, the integrals cancel, and we finally have, noting that, $u(0)=1$, 
\[
\begin{aligned} 
(z\tilde{y}')'(z) +2\A \tilde{y}'(z)+z\tilde{y}(z)&= g'(\log z - \log w) \frac{1}{z}+2\A g(\log z-\log w)\frac{1}{z}\\
&=\frac{\A}{2^{2\A-2}w^{2\A}}\lb \frac{z^2-w^2}{z}\rb^{2\A-1}.
\end{aligned}
\]
\epr 
Next let us define
\[
f(s)=\frac{\A \mbox{sgn}(|z|-|w|)}{2^{2\A-2}}\frac{\Bigg{\{}(z^2-w^2)^2-2s^2(z^2+w^2)+s^4\Bigg{\}}^{\frac{2\A-1}{2}}}{z^{2\A}w^{2\A}},
\]
and let 
\[
P(z,w)=\sum\limits_{j=0}^{\A-1} a_j D_s^{j}f(0).
\]
\begin{lemma}\label{L4.3}
    There exist constants $\{a_j\}$ such that $P(z,w)$ as a function of $z$ for $z,w\neq 0$ and $z\neq w$, solves 
    \[
    (zy')'(z,w) +2\A y'(z,w)+zy(z,w)=\frac{\A }{2^{2\A-2}w^{2\A}}\lb \frac{z^2-w^2}{z}\rb^{2\A-1}.
    \]
    In fact, the constants $a_j$ are recursively defined by $a_j=2(\A-j)a_{j-1}$ with $a_0=1$. 
Consequently, 
\[
y(z,w)=\tilde{y}(z,w)-P(z,w)=\int\limits_{C} j_{\A}(\sqrt{z^2+w^2-2zw\cosh \zeta}) \sinh^{2\A}\zeta  \D \zeta-\sum\limits_{j=0}^{\A-1} a_j D_s^{j}f(0)
\]
solves 
\[
(zy')'(z,w) +2\A y'(z,w)+zy(z,w)=0.
\]
\end{lemma}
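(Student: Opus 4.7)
The plan is to construct an explicit particular solution $P(z,w)$ to the inhomogeneous ODE of Lemma \ref{L4.2} of the prescribed form $P = \sum_{j=0}^{\A-1} a_j D_s^j f(0)$, so that $y = \tilde y - P$ automatically satisfies the homogeneous Bessel-type equation. The first observation I would record is that the right-hand side of Lemma \ref{L4.2} equals $z f(0)$. Indeed, the quartic inside $f$ factorises as
\[
(z^2-w^2)^2 - 2s^2(z^2+w^2) + s^4 = \bigl(s^2-(z+w)^2\bigr)\bigl(s^2-(z-w)^2\bigr) =: Q(s),
\]
so $Q(0) = (z^2-w^2)^2$; since $2\A-1$ is odd, the $\mathrm{sgn}(|z|-|w|)$ prefactor collapses $|z^2-w^2|^{2\A-1}$ to $(z^2-w^2)^{2\A-1}$ in the principal branches. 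Consequently
\[
z f(0) = \frac{\A}{2^{2\A-2}w^{2\A}}\lb\frac{z^2-w^2}{z}\rb^{2\A-1},
\]
which is exactly the right-hand side, so the task reduces to exhibiting $P$ with $\Lc P = z f(0)$, where $\Lc y := (zy')' + 2\A y' + zy$.

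Next I would expand each $D_s^j f(0)$ via Lemma \ref{L2.3}. Since $Q(s)$ is polynomial of degree $2$ in the variable $u = s^2$, one checks directly that
\[
D_s Q\big|_{s=0} = -4(z^2+w^2), \qquad D_s^2 Q\big|_{s=0} = 8, \qquad D_s^k Q \equiv 0 \text{ for } k \ge 3,
\]
so the Fa\`a di Bruno formula of Lemma \ref{L2.3} applies to $f = C\,Q^{(2\A-1)/2}$ (with $C = \A\,\mathrm{sgn}(|z|-|w|)/(2^{2\A-2}z^{2\A}w^{2\A})$) and writes $D_s^j f(0)$ as a finite linear combination of building blocks
\[
\frac{(z^2-w^2)^{2\A-1-2m}(z^2+w^2)^{2m-j}}{z^{2\A}w^{2\A}}, \qquad \lceil j/2\rceil \le m \le j,
\]
with explicit Pochhammer-type weights. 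With this in hand, I would compute $\Lc D_s^j f(0)$ blockwise using the identity
\[
\tfrac{1}{z}\partial_z\bigl((z^2-w^2)^p(z^2+w^2)^q\bigr) = 2p(z^2-w^2)^{p-1}(z^2+w^2)^q + 2q(z^2-w^2)^p(z^2+w^2)^{q-1}
\]
and its second iterate, then regroup contributions by block type. This should yield an identity of the schematic form $\Lc\bigl(D_s^j f(0)\bigr) = \nu_j\, D_s^{j+1}f(0) + \Ec_j$, where $\Ec_j$ involves only lower-order $D_s^k f(0)$ together with a piece proportional to $z f(0)$. Imposing $\Lc P = z f(0)$ then forces the $D_s^{j+1}f(0)$ contributions across adjacent summands to cancel, producing the recursion $a_j = 2(\A-j)a_{j-1}$ with $a_0 = 1$; note the automatic termination $a_\A = 0$, which is precisely why the sum truncates at $j = \A-1$.

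The main obstacle will be the combinatorial bookkeeping in matching the Pochhammer factors coming from Lemma \ref{L2.3} with the binomial coefficients generated by $\Lc$ acting on the building blocks, and verifying that the cross-terms telescope exactly as claimed rather than leaving residual contributions. Sanity checks at $\A = 1$ (where $P = a_0 f(0)$ alone, and $\Lc f(0) = z f(0)$ can be verified by a direct four-line calculation) and at $\A = 2$ should make the general pattern transparent, after which the full recursion can be established either by direct coefficient matching in the resulting finite double sum, or by induction on $j$ together with the initial step $j = 0$.
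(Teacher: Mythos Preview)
Your overall strategy—use Fa\`a di Bruno to expand $D_s^j f(0)$, apply $\Lc$ to the resulting building blocks, and match coefficients to pin down the recursion $a_j=2(\A-j)a_{j-1}$—is exactly what the paper does, and your observation that the right-hand side of Lemma \ref{L4.2} equals $zf(0)$ is a clean starting point. However, the specific mechanism you propose for the matching step has a gap. Your ``schematic form'' $\Lc\bigl(D_s^j f(0)\bigr)=\nu_j D_s^{j+1}f(0)+\Ec_j$, with $\Ec_j$ in the span of lower-order $D_s^k f(0)$ and $zf(0)$, cannot hold as stated: each $D_s^k f(0)$ carries a factor $z^{-2\A}$, whereas $\Lc$ applied to such a term produces terms with $z^{-(2\A-1)}$ (as does $zf(0)$). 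The $z$-denominators do not line up, so $\Lc\bigl(D_s^j f(0)\bigr)$ does not sit in the span you describe, and the telescoping you envision in that basis does not happen.

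The paper circumvents this by abandoning the $D_s^k f(0)$ basis after the Fa\`a di Bruno step. It rewrites $(z^2+w^2)^{2q-j}=\bigl((z^2-w^2)+2w^2\bigr)^{2q-j}$ binomially, so that every block becomes a monomial $(z^2-w^2)^{2\A-1-j-r}(2w^2)^r/z^{2\A}$; applying $\Lc$ to $(z^2-w^2)^p/z^{2\A}$ yields three explicit terms all proportional to $(z^2-w^2)^k/z^{2\A-1}$. One then equates the coefficient of each power $(z^2-w^2)^{2\A-1-m}$ (further stratified by the power of $w^2$) in $\Lc(\tilde P)$ with the corresponding coefficient of the right-hand side. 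This produces, for each $(m,p)$, a finite sum $S(m,p)$ that must vanish under the choice $a_p=2(\A-p)a_{p-1}$. Verifying $S(m,p)=0$ is the real work: it reduces to two binomial identities of Vandermonde type, which the paper proves via generating functions (contour integrals around the origin). Your proposal correctly anticipates that ``combinatorial bookkeeping'' is the obstacle, but underestimates it—this is not a routine telescoping but requires closed-form evaluation of sums like $\sum_q \binom{\A-q}{p-q}\binom{2\A-m}{2q-m}$.
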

\begin{proof} We are interested in taking $D$ derivatives of $f(s)$ up to order $\A-1$. For simplicity, let us denote $f(s)= \frac{\A}{2^{2\A-2}}\tilde{f}(s)$  with 
        \[
        \tilde{f}(s)= \frac{\mbox{sgn}(|z|-|w|)\Bigg{\{}(z^2-w^2)^2-2s^2(z^2+w^2)+s^4\Bigg{\}}^{\frac{2\A-1}{2}}}{z^{2\A}w^{2\A}}
        \]and $\tilde{P}(z,w)=\sum\limits_{j=0}^{\A-1} a_j D_s^{j}\tilde{f}(0)$.\\ 
		We define
		\begin{align*}
			Q(s) &= (z^2-w^2)^2-2s^2(z^2+w^2)+s^4,\\
			R(s) &= Q(s)^{\frac{2\A-1}{2}}.
		\end{align*}
		Then we would like to take $D$ derivatives  up to order $\A-1$ of $R(s)$. To find explicit form of these $D$ derivatives, we use the Fa\`a di Bruno's formula; see Lemma \ref{L2.2}. We set 
        Indeed, in our situation, 
		\[
		R(s)=x^{\frac{2\A-1}{2}} \mbox{ and } 
		Q(s)=(z^2-w^2)^2-2s^2(z^2+w^2)+s^4.
		\]
		In our current set-up, we have $D^{j}Q(s)=0$ for $j\geq 3$. We also have 
		\[
		DQ(s)=4(s^2-(z^2+w^2)),\quad
		D^2 Q(s)=8.
		\]
		Evaluating these at $s=0$ we get,
		\[
		Q(0)=(z^2-w^2)^2,\quad DQ(0)=-4(z^2+w^2),\quad D^2Q(0)=8.
		\]
		Due to the existence of only two non-trivial $D$ derivatives of $Q(s)$, recall from Lemma \ref{L2.3} that the Bell polynomials are subject to the following two conditions: 
		\[
		j_1+j_2=q, \quad j_1+2j_2=j.
		\]
		Solving this gives $j_2=j-q$ and $j_1=2q-j$ with $q\geq j/2$. We now arrive at the following formula for $D^{j} [R(Q(s))]$ evaluated at $s=0$: 
        \Beq\label{D_s derivative}
		\begin{aligned}
			D^{j}[R(Q(s))]|_{s=0} &= \sum\limits_{q\geq j/2}^{j} \frac{(2\A)! (\A-q)!}{4^q\A!(2\A-2q)!} (Q(0))^{\A-q-1/2} \frac{j!}{(2q-j)!(j-q)!} (DQ(0))^{2q-j} \left(\frac{D^2Q(0)}{2}\right)^{j-q}\\
			&= \sum\limits_{q\geq j/2}^{j} \frac{(2\A)! (\A-q)!}{4^q\A!(2\A-2q)!}  \frac{(-1)^j j! 4^q}{(2q-j)!(j-q)!} \mathrm{sgn}(|z|-|w|)(z^2-w^2)^{2\A-2q-1} (z^2+w^2)^{2q-j}\\
			&= \sum\limits_{q\geq j/2}^{j} \frac{(-1)^j (2\A)! (\A-q)! j!}{(2\A-2q)! \A! (2q-j)!(j-q)!}\mathrm{sgn}(|z|-|w|) (z^2-w^2)^{2\A-2q-1} (z^2-w^2+2w^2)^{2q-j}\\
			&= \sum\limits_{q\geq j/2}^{j} \sum\limits_{r=0}^{2q-j} {2q-j \choose r} \frac{(-1)^j (2\A)! (\A-q)! j!}{(2\A-2q)! \A! (2q-j)!(j-q)!} \mathrm{sgn}(|z|-|w|)(z^2-w^2)^{2\A-j-r-1} (2w^2)^{r}.	
		\end{aligned}
        \Eeq
		Then, 
		\Beq\label{Eqss4.11}
			D^j_s \tilde{f}(0) = \sum\limits_{q\geq j/2}^{j} \sum\limits_{r=0}^{2q-j} {2q-j \choose r} \frac{(-1)^j (2\A)! (\A-q)! j!}{(2\A-2q)! \A! (2q-j)!(j-q)!}  (2w^2)^{r}\frac{(z^2-w^2)^{2\A-j-r-1}}{z^{2\A}w^{2\A}}.	
		\Eeq 
        We note that $\mathrm{sgn}(|z|-|w|)^2$ appears in the expression for $D^{j}\tilde{f}(0)$ and consequently, this term can be ignored going forward. 
        
		Let us write \eqref{Eqss4.11} as 
		\[
			D_s^j \tilde{f}(0) = \sum\limits_{q\geq j/2}^{j} \sum\limits_{r=0}^{2q-j} C(j,q,r) \frac{(z^2-w^2)^{2\A-j-r-1}}{z^{2\A}w^{2\A}},
            \]
			where 
            \[
			C(j,q,r) = {2q-j \choose r} \frac{(-1)^j (2\A)! (\A-q)! j!}{(2\A-2q)! \A! (2q-j)!(j-q)!}  (2w^2)^{r}.
		\]
		Define
		\[
		y_0 = \frac{(z^2-w^2)^{2\A-j-r-1}}{z^{2\A}}.
		\]
		Let us denote $\Lc (y)=(zy')'(z,w) +2\A y'(z,w)+zy(z,w)$. We next compute $\Lc (y_0)$.
		We have 
		\begin{align*}
			y_0' &= 2(2\A-j-r-1)\frac{(z^2-w^2)^{2\A-j-r-2}}{z^{2\A-1}} - 2\A\frac{(z^2-w^2)^{2\A-j-r-1}}{z^{2\A+1}}\\
			&= 2\frac{(z^2-w^2)^{2\A-j-r-2}}{z^{2\A+1}} (z^2(\A-j-r-1) + \A w^2),\\
			\intertext{and}
			y_0'' &= 4(\A-j-r-1)(2\A-j-r-2)\frac{(z^2-w^2)^{2\A-j-r-3}}{z^{2\A-2}} -2(2\A-1)(\A-j-r-1)\frac{(z^2-w^2)^{2\A-j-r-2}}{z^{2\A}}\\
			&+ 4\A w^2(2\A-j-r-2) \frac{(z^2-w^2)^{2\A-j-r-3}}{z^{2\A}} -(2\A)(2\A+1)w^2\frac{(z^2-w^2)^{2\A-j-r-2}}{z^{2\A+2}}.
		\end{align*}
		Then 
		\begin{align*}
			\Lc (y_0) &= zy_0'' + (2\A+1)y_0' +zy_0\\
			&= 4(\A-j-r-1) \frac{(z^2-w^2)^{2\A-j-r-2}}{z^{2\A-1}} + \frac{(z^2-w^2)^{2\A-j-r-1}}{z^{2\A-1}}\\ &+4(2\A-j-r-2)\frac{(z^2-w^2)^{2\A-j-r-3}}{z^{2\A-1}}(z^2(\A-j-r-1) +\A w^2)\\
			&= 4(\A-j-r-1)(2\A-j-r-1) \frac{(z^2-w^2)^{2\A-j-r-2}}{z^{2\A-1}} + \frac{(z^2-w^2)^{2\A-j-r-1}}{z^{2\A-1}}\\ &+4(2\A-j-r-1)(2\A-j-r-2)w^2\frac{(z^2-w^2)^{2\A-j-r-3}}{z^{2\A-1}}.
		\end{align*}
		Therefore 
		\begin{align*}
			\Lc (\tilde{P}) = \sum\limits_{j=0}^{\A-1} \sum\limits_{q\geq j/2}^{j} \sum\limits_{r=0}^{2q-j} a_j C(j,q,r) \Big{\{} 
			&\frac{(z^2-w^2)^{2\A-j-r-1}}{z^{2\A-1}w^{2\A}} +4(\A-j-r-1)(2\A-j-r-1) \frac{(z^2-w^2)^{2\A-j-r-2}}{z^{2\A-1}w^{2\A}}\\ &+4(2\A-j-r-1)(2\A-j-r-2)w^2\frac{(z^2-w^2)^{2\A-j-r-3}}{z^{2\A-1}w^{2\A}} \Big{\}}.
		\end{align*}
        We show that with the choice of constants, $a_j=2(\A-j)a_{j-1}$, $a_0=1$, we have that $\Lc(\tilde{P})=\frac{(z^2-w^2)^{2\A-1}}{z^{2\A-1}w^{2\A}}$.
		Since the denominators are same in both the expressions, we can simply compare the coefficients of different powers of $(z^2-w^2)$ to find $a_j$.\\
		Let us set $j+r=l$. We consider
		\begin{align*}
			S:= \sum\limits_{j=0}^{\A-1} \sum\limits_{q\geq j/2}^{j} \sum\limits_{l=j}^{2q} a_j K(j,q,l) \Big{\{} 
			&(z^2-w^2)^{2\A-l-1} + 4(\A-l-1)(2\A-l-1)(z^2-w^2)^{2\A-l-2}\\ &+4(2\A-l-1)(2\A-l-2)w^2(z^2-w^2)^{2\A-l-3} \Big{\}},\\
			\intertext{with}
			K(j,q,l)= C(j,q,l-j)&= {2q-j \choose l-j} \frac{(-1)^j (2\A)! (\A-q)! j!}{(2\A-2q)! \A! (2q-j)!(j-q)!}  (2w^2)^{l-j}.
		\end{align*}
		Interchanging the order of summation, we have
		\begin{align*}
			S= \sum\limits_{l=0}^{2\A-2} \sum\limits_{j\geq l/2}^{\min\{l,\A-1\}} \sum\limits_{q\geq l/2}^{j} a_j K(j,q,l) \Big{\{} 
			&(z^2-w^2)^{2\A-l-1} + 4(\A-l-1)(2\A-l-1)(z^2-w^2)^{2\A-l-2}\\ &+4(2\A-l-1)(2\A-l-2)w^2(z^2-w^2)^{2\A-l-3} \Big{\}}.
		\end{align*}
		We set the coefficient of $(z^2-w^2)^{2\A-1}$ to $1$.
		The only term having $(z^2-w^2)^{2\A-1}$ is given by the first summand with $l=0$. For $l=0$, the only possible triple for $(j,q,r)$ is $(0,0,0)$, since $q\leq j \leq l$ and $r \leq 2q-j$. Since $C(0,0,0)=1$, we have $a_0 = 1$.
		
		Next, we consider the coefficient of $(z^2-w^2)^{2\A-2}$. This is contributed by two terms: the first summand with $l=1$ and the second summand with $l=0$. The only triple $(j,q,r)$ corresponding to $l=1$ is $(1,1,0)$. Thus, we have
		\[
			4a_0C(0,0,0)(\A-1)(2\A-1) + a_1C(1,1,0) = 0.
            \]
            Hence 
			\[
            4(\A-1)(2\A-1) -2(2\A-1)a_1 = 0. 
		\]
		Since $\A$ is an integer, $2\A-1\neq 0$ and we get $a_1=2(\A-1)$.\\

        Let us now consider the terms involving $(z^2-w^2)^{2\A-1-m}$ for $2\leq m \leq 2\A-1$. Such terms are given by first, second and third summands with $l=m, l=m-1$ and $l=m-2$, respectively. 
		Then the coefficient of $(z^2-w^2)^{2\A-1-m}$ is given as
		\begin{align*}
			S(m)&:= \sum\limits_{j\geq m/2}^{\min\{m,\A-1\}} \sum\limits_{q\geq m/2}^{j} a_j K(j,q,m)\\
			&+\sum\limits_{j\geq (m-1)/2}^{\min\{m-1,\A-1\}} \sum\limits_{q\geq (m-1)/2}^{j} 4 a_j K(j,q,m-1) (\A-m)(2\A-m)\\ 
			&+\sum\limits_{j\geq (m-2)/2}^{\min\{m-2,\A-1\}} \sum\limits_{q\geq (m-2)/2}^{j} 4 a_j K(j,q,m-2)(2\A-m)(2\A-m+1)w^2.
		\end{align*}
		Re-indexing the second and third summations, we get
		\begin{align*}
			S(m)&= \sum\limits_{j\geq m/2}^{\min\{m,\A-1\}} \sum\limits_{q\geq m/2}^{j} a_j C(j,q,m-j)\\
			&+\sum\limits_{j\geq (m+1)/2}^{\min\{m,\A\}} \sum\limits_{q\geq (m-1)/2}^{j-1} 4 a_{j-1} C(j-1,q,m-j) (\A-m)(2\A-m)\\ 
			&+\sum\limits_{j\geq m/2}^{\min\{m-1,\A\}} \sum\limits_{q\geq (m-2)/2}^{j-1} 2(2w^2) a_{j-1} C(j-1,q,m-j-1)(2\A-m)(2\A-m+1).
		\end{align*}
		Recall that 
		\[
		C(j,q,r) = {2q-j \choose r} \frac{(-1)^j (2\A)! (\A-q)! j!}{(2\A-2q)! \A! (2q-j)!(j-q)!}  (2w^2)^{r}.
		\]
		Let us write $C(j,q,r)=(w^2)^r \tilde{C}(j,q,r)$.\\
		We equate the coefficients of the polynomial in $w^2$ with $0$ to find $a_j$. Let us fix $j=p$.\\
		Then the coefficient of $(w^2)^{m-p}$ is given as:
		\begin{align*}
			S(m,p)&:= \sum\limits_{q\geq m/2}^{p} a_p \tilde{C}(p,q,m-p)\\
			&+\sum\limits_{q\geq (m-1)/2}^{p-1} 4 a_{p-1} \tilde{C}(p-1,q,m-p) (\A-m)(2\A-m)\\ 
			&+\sum\limits_{q\geq (m-2)/2}^{p-1} 4 a_{p-1} \tilde{C}(p-1,q,m-p-1)(2\A-m)(2\A-m+1).
		\end{align*}
		Re-indexing the second and third summations, we get
		\begin{align*}
			S(m,p)&= \sum\limits_{q\geq m/2}^{p} a_p \tilde{C}(p,q,m-p)\\
			&+\sum\limits_{q\geq (m+1)/2}^{p} 4 a_{p-1} \tilde{C}(p-1,q-1,m-p) (\A-m)(2\A-m)\\ 
			&+\sum\limits_{q\geq m/2}^{p} 4 a_{p-1} \tilde{C}(p-1,q-1,m-p-1)(2\A-m)(2\A-m+1).
		\end{align*}
		Let us assume $m$ is odd, as the other case follows similarly. Then substituting the expression for $\tilde{C}(j,q,r)$, we have 
		\begin{align*}
			S(m,p)&= \frac{(-1)^{p}2^{m-p}(p-1)!(2\A)!}{\A!(m-p)!}\sum\limits_{q= (m+1)/2}^{p}\Bigg{\{} \frac{p a_p (\A-q)!}{(2\A-2q)!(2q-m)!(p-q)!}\\
			&- \frac{4a_{p-1}(\A-m)(2\A-m)(\A-q+1)!}{(2\A-2q+2)!(2q-m-1)!(p-q)!}\\
			&- \frac{2a_{p-1}(2\A-m)(2\A-m+1)(m-p)(\A-q+1)!}{(2\A-2q+2)!(2q-m)!(p-q)!} \Bigg{\}}.
		\end{align*} 
		Since we set $S(m,p)=0$, we ignore the constant outside the summation from now on. Then we have 
		\begin{align*}
			S(m,p)&= \sum\limits_{q= (m+1)/2}^{p} \Bigg{\{} \frac{pa_p(\A-p)!}{(2\A-m)!}{\A-q\choose p-q}{2\A-m \choose 2q-m}\\
			&-\frac{2a_{p-1}(\A-m)(2\A-m)(\A-p)!}{(2\A-m)!}{\A-q\choose p-q}{2\A-m \choose 2q-m-1}\\
			&-\frac{a_{p-1}(2\A-m)(m-p)(\A-p)!}{(2\A-m)!}{\A-q\choose p-q}{2\A-m+1 \choose 2q-m} \Bigg{\}}.
		\end{align*}
		We ignore the constant $\frac{(\A-p)!}{(2\A-m)!}$ and rewrite it as
		\begin{align*}
			S(m,p)&= \sum\limits_{q= (m+1)/2}^{p} {\A-q\choose p-q} \Bigg{\{} pa_p {2\A-m \choose 2q-m}- 2a_{p-1}(\A-m)(2\A-m){2\A-m \choose 2q-m-1}\\
            &- a_{p-1}(2\A-m)(m-p){2\A-m+1 \choose 2q-m} \Bigg{\}}. 
		\end{align*}
		Using Pascal's rule: ${n+1 \choose r}= {n \choose r}+{n \choose r-1}$ in the second summand, we have
		\begin{align*}
			S(m,p)&= \sum\limits_{q= (m+1)/2}^{p} {\A-q\choose p-q} \Bigg{\{} {2\A-m \choose 2q-m} (pa_p+ 2a_{p-1}(\A-m)(2\A-m))\\
			&- {2\A-m+1 \choose 2q-m}( a_{p-1}(2\A-m)(2\A-m-p)) \Bigg{\}}.
		\end{align*}
		Let us show that 
			\[
			C:=\sum\limits_{q=\frac{m+1}{2}}^{p}{\A-q \choose p-q}{2\A -m \choose 2q-m}=2^{2p-m}{\A+p-m-1\choose 2p-m}+2^{2p-m-1}{\A+p-m-1\choose 2p-m-1}
			\]
			Let us first observe that we can replace the lower limit by $0$. We have 
			\[
			\begin{aligned} 
				\sum\limits_{q=\frac{m+1}{2}}^{p}{\A-q \choose p-q}{2\A -m \choose 2q-m}&=\sum\limits_{q=0}^{p}{\A-q \choose p-q}{2\A -m \choose 2q-m}\\
				&=\sum\limits_{q=0}^{p}{\A -p+q\choose q}{2\A -m \choose 2p-m-2q}\\
				&=\frac{1}{2\pi \I}\int\limits_{|z|=\ve}\sum\limits_{q=0}^{p}{\A -p+q\choose q}\frac{(1+z)^{2\A-m}}{z^{2p-m-2q+1}}\D z. 
			\end{aligned} 
			\]
			The upper limit can be considered as $\infty$ and we have 
			\[
			\begin{aligned}
				C&=\frac{1}{2\pi \I}\int\limits_{|z|=\ve}\sum\limits_{q=0}^{\infty}{\A -p+q\choose q}\frac{(1+z)^{2\A-m}}{z^{2p-m-2q+1}}\\
				&=\frac{1}{2\pi \I}\int\limits_{|z|=\ve}\frac{(1+z)^{2\A-m}}{z^{2p-m+1}}\frac{1}{(1-z^2)^{\A-p+1}} \D z\\
				&=\frac{1}{2\pi \I}\int\limits_{|z|=\ve}\frac{(1+z)^{\A+p-m-1}}{z^{2p-m+1}}\frac{1}{(1-z)^{\A-p+1}} \D z\\
				&=\frac{1}{2\pi \I}\int\limits_{|z|=\ve}\sum\limits_{j=0}^{\A+p-m-1}\sum\limits_{l=0}^{\infty}{\A+p-m-1\choose j} {\A-p+l\choose l} \frac{z^{j+l}}{z^{2p-m+1}} \D z\\
				&=\sum\limits_{j=0}^{\A+p-m-1}{\A+p-m-1\choose j}{\A+p-m-j\choose 2p-m-j}\\
				&=\frac{1}{2\pi \I}\int\limits_{|z|=\ve}\frac{(1+z)^{\A+p-m}}{z^{2p-m+1}} \sum\limits_{j=0}^{\A+p-m-1}{\A+p-m-1\choose j}\lb \frac{z}{1+z}\rb^{j} \D z\\
				&=\frac{1}{2\pi \I}\int\limits_{|z|=\ve}\frac{(1+z)^{\A+p-m}}{z^{2p-m+1}}\frac{(1+2z)^{\A+p-m-1}}{(1+z)^{\A+p-m-1}} \D z\\
				&=\frac{1}{2\pi \I}\int\limits_{|z|=\ve}\frac{(1+z)(1+2z)^{\A+p-m-1}}{z^{2p-m+1}} \D z\\
				&=2^{2p-m}{\A+p-m-1\choose 2p-m}+2^{2p-m-1}{\A+p-m-1\choose 2p-m-1}.
			\end{aligned}
			\]
            The same argument gives
			\[
			D:=\sum\limits_{q=\frac{m+1}{2}}^{p}{\A-q \choose p-q}{2\A -m +1\choose 2q-m}=2^{2p-m}{\A+p-m\choose 2p-m}.
			\]
			Let us now show that for $a_p=2a_{p-1}(\A-p)$, $S(m,p)$ given below is $0$.
			That is, we want to show that 
			\begin{align*}
				S(m,p)&= \sum\limits_{q= (m+1)/2}^{p} {\A-q\choose p-q} \Bigg{\{} {2\A-m \choose 2q-m} (2p(\A-p)+ 2(\A-m)(2\A-m))\\
				&- {2\A-m+1 \choose 2q-m}((2\A-m)(2\A-m-p)) \Bigg{\}}=0.
			\end{align*}
			We have 
			\[
			\begin{aligned}
				S(m,p)&= \Bigg{\{}2^{2p-m-1}{\A+p-m-1\choose 2p-m-1}+2^{2p-m}{\A+p-m-1\choose 2p-m}\Bigg{\}}(2p(\A-p)+2(\A-m)(2\A-m))\\
				&-2^{2p-m}{\A+p-m\choose 2p-m}((2\A-m)(2\A-m-p))\\
				&=2^{2p-m}\Bigg{\{}\Big{\{}{\A+p-m-1\choose 2p-m-1}+2{\A+p-m-1\choose 2p-m}\Big{\}}(p(\A-p)+(\A-m)(2\A-m))\\
				&-{\A+p-m\choose 2p-m}((2\A-m)(2\A-m-p))\Bigg{\}}\\
				&=2^{2p-m}\Bigg{\{}\Big{\{}{\A+p-m\choose 2p-m}+{\A+p-m-1\choose 2p-m}\Big{\}}(p(\A-p)+(\A-m)(2\A-m))\\
				&-{\A+p-m\choose 2p-m}((2\A-m)(2\A-m-p))\Bigg{\}}.
			\end{aligned}
			\]
			In the last step, we use Pascal's rule. Further simplifying, we have 
			\[
			\begin{aligned}
				S(m,p)&=2^{2p-m}\Bigg{\{}-{\A+p-m \choose 2p-m}(\A-p)(2\A-m-p)+{\A+p-m-1\choose 2p-m}(p(\A-p)+(\A-m)(2\A-m))\Bigg{\}}\\
				&=2^{2p-m}\Bigg{\{}{\A+p-m-1\choose 2p-m}\Bigg{\{}(p(\A-p)+(\A-m)(2\A-m))-(\A+p-m)(2\A-m-p)\Bigg{\}}\\
				&=0.
			\end{aligned}
			\]
			Hence we have shown that $S(m,p)=0.$ This shows that with $a_j=2(\A-j)a_{j-1}$, $P(z,w)$ as a function of $z$ solves 
			\[
			(zy')'(z,w) +2\A y'(z,w)+zy(z,w)=\frac{\A}{2^{2\A-2}w^{2\A}}\lb \frac{z^2-w^2}{z}\rb^{2\A-1}.
			\]
            Finally, $y(z,w) = \tilde{y}(z,w)-P(z,w)$ solves 
            \[
(zy')'(z,w) +2\A y'(z,w)+zy(z,w)=0.
\]
This completes the proof. 
	\end{proof}
    The following lemma was used in the proof of Theorem \ref{Nicholson-type formula}.
    \begin{lemma}\label{Lem4.1} Let $\A $ be a positive integer. We have the following equality: 
        \[
        \begin{aligned} 
    S:=\frac{(-1)^{\A-1}}{\A} \sum\limits_{q=0}^{\A-1} \frac{(2\A)! (\A-q)!}{(2\A-2q)!}  &\frac{1}{(2q-\A+1)!(\A-1-q)!} A^{2\A-2q-1} B^{2q-\A+1} \\&=\frac{(-1)^{\A-1} (\A-1)!}{4}{2\A \choose \A}\lb (A+B)^{\A}-(B-A)^{\A}\rb.
    \end{aligned} 
        \]
    \end{lemma}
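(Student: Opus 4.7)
The plan is to reduce the claimed identity to a polynomial identity in the two variables $A$ and $B$ by reindexing the sum on the left and applying the binomial theorem on the right, after which everything collapses to an elementary factorial check.

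First, I would observe that the factor $1/(2q-\A+1)!$ kills every summand with $q < (\A-1)/2$, so the effective range of summation is $q\in\{\lceil(\A-1)/2\rceil,\dots,\A-1\}$. This naturally suggests the substitution $j=\A-1-q$. Under it, the exponents become
\[
2\A-2q-1 = 2j+1,\qquad 2q-\A+1 = \A-1-2j,
\]
and $\A-q = j+1$, $2\A-2q = 2j+2$, so the sum rewrites as a sum over $j\in\{0,1,\dots,\lfloor(\A-1)/2\rfloor\}$ of monomials $A^{2j+1}B^{\A-1-2j}$ with coefficients built from factorials in $j$. Using $(j+1)!/j!=j+1$ and $(2j+2)!=(2j+2)(2j+1)!$, the coefficient ratio $\frac{(j+1)!}{(2j+2)!\,j!}$ collapses to $\frac{1}{2(2j+1)!}$, giving
\[
S=\frac{(-1)^{\A-1}}{2\A}\sum_{j=0}^{\lfloor(\A-1)/2\rfloor}\frac{(2\A)!}{(2j+1)!(\A-1-2j)!}\,A^{2j+1}B^{\A-1-2j}.
\]

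Next, I would expand the right-hand side by the binomial theorem. Since only odd powers of $A$ survive in the difference $(A+B)^\A-(B-A)^\A$, one has
\[
(A+B)^{\A}-(B-A)^{\A}=2\sum_{j=0}^{\lfloor(\A-1)/2\rfloor}\binom{\A}{2j+1}A^{2j+1}B^{\A-1-2j},
\]
so the right-hand side of the lemma equals $\frac{(-1)^{\A-1}(\A-1)!}{2}\binom{2\A}{\A}$ times the same monomial sum.

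Finally, equating the coefficient of each monomial $A^{2j+1}B^{\A-1-2j}$ reduces the lemma to the elementary identity
\[
\frac{(2\A)!}{2\A\,(2j+1)!(\A-1-2j)!}=\frac{(\A-1)!}{2}\binom{2\A}{\A}\binom{\A}{2j+1},
\]
which follows immediately from $\binom{2\A}{\A}\binom{\A}{2j+1}=\frac{(2\A)!}{\A!\,(2j+1)!(\A-1-2j)!}$ combined with $(\A-1)!/\A!=1/\A$. There is no analytic obstacle; the only care required is the correct bookkeeping of the summation range and the simplification of the factorial ratios, and the $(-1)^{\A-1}$ signs match trivially on both sides.
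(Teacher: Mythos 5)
Your proposal is correct and follows essentially the same route as the paper's proof: reindex the sum by $q\mapsto\A-1-q$, collapse the factorial ratio to $\tfrac{1}{2(2j+1)!}$, expand $(A+B)^{\A}-(B-A)^{\A}$ into its odd-power terms, and match coefficients. The only cosmetic difference is the order of operations (the paper simplifies the coefficient into $\binom{2\A}{\A}\binom{\A}{2q+1-\A}$ before reindexing), and your explicit remark that $1/(2q-\A+1)!$ vanishes for $2q-\A+1<0$ makes the summation-range bookkeeping slightly more transparent than the paper's.
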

    \bpr
    We have 
    \[
    S=\frac{(-1)^{\A-1}(\A-1)!}{2}{2\A\choose \A} \sum\limits_{q=0}^{\A-1}{\A \choose 2q+1-\A} A^{2\A-2q-1} B^{2q-\A+1}.
    \]
    We replace the index $q$ by $\A-1-q$. We then get,
    \[
    \begin{aligned} 
    S&=\frac{(-1)^{\A-1}(\A-1)!}{2}{2\A\choose \A} \sum\limits_{q=0}^{\A-1}{\A \choose \A-2q-1}A^{2q+1} B^{\A-2q-1}\\
    &=\frac{(-1)^{\A-1}(\A-1)!}{2}{2\A\choose \A} \sum\limits_{q=0}^{\A}{\A \choose 2q+1}A^{2q+1} B^{\A-2q-1}
    \end{aligned} 
    \]
    We also note that we have increased the upper index of the last sum to $\A$ and this does not affect the sum. 

    We have 
    \[
    \begin{aligned} 
    & (A+B)^{\A}=\sum\limits_{r=0}^{\A} {\A \choose r} A^r B^{\A-r}\\
    &(B-A)^{\A}=\sum\limits_{r=0}^{\A} {\A \choose r} (-1)^r A^r B^{\A-r}.
    \end{aligned} 
    \]
    We then have 
    \[
    \begin{aligned} 
    (A+B)^{\A}-(B-A)^{\A}&=\sum\limits_{r=0}^{\A} {\A \choose r} A^r B^{\A-r}(1-(-1)^r)\\
    &=2\sum\limits_{r=0, r\mathrm{-odd }}^{\A}{\A \choose r} A^r B^{\A-r}\\
    &=2\sum\limits_{q=0}^{\A} {\A \choose 2q+1} A^{2q+1} B^{\A-2q-1}.
    \end{aligned} 
    \]
    Going back, we get,
    \[
    S=\frac{(-1)^{\A-1} (\A-1)!}{4}{2\A \choose \A}\lb (A+B)^{\A}-(B-A)^{\A}\rb.
    \]
    \epr
    Using $A=z^2-w^2$ and $B=z^2+w^2$, we then have 
    \Beq\label{Eqss4.1}
    \begin{aligned} 
    &\frac{(-1)^{\A-1}}{\A} \sum\limits_{q=0}^{\A-1} \frac{(2\A)! (\A-q)!}{(2\A-2q)!}  \frac{1}{(2q-\A+1)!(\A-1-q)!} (z^2-w^2)^{2\A-2q-1} (z^2+w^2)^{2q-\A+1}\\
    &=\frac{(-1)^{\A-1} (\A-1)!2^{\A}}{4}{2\A \choose \A}\lb z^{2\A}-w^{2\A}\rb.
    \end{aligned} 
    \Eeq
    \subsection{A cross product identity for Bessel functions; Proof of Theorem \ref{CP-Identity}}

    \begin{proof}[Proof of Theorem \ref{CP-Identity}] We will show that, for $\lambda>0$, 
	\[
	I := \int\limits_{0}^{2} th(t) \{j_{\A}(\lambda t)y_{\A}(\lambda) - j_{\A}(\lambda)y_{\A}(\lambda t)\} \D t = 0.
	\]
	Using the formula from Theorem \ref{Nicholson-type formula} with $z=\lambda t$, $w=\lambda$, we have 
	\begin{align*}
		&C \Big{\{}j_{\A}(\lambda t)y_{\A}(\lambda) - j_{\A}(\lambda)y_{\A}(\lambda t)\Big{\}}
		=\int\limits_{C} j_{\A}(\lambda\sqrt{1+t^2-2t\cosh \zeta}) \sinh^{2\A}\zeta \D \zeta-\sum\limits_{j=0}^{\A-1} a_{j} D_{s}^{j} f(0),
        \end{align*}
		where we recall that 
        \begin{align*}
		f(s)&=\frac{\A \,\mathrm{sgn}(\lambda(t-1))}{2^{2\A-2}}\frac{\Bigg{\{}\lambda^4(t^2-1)^2-2\lambda^2 s^2(1+t^2)+s^4\Bigg{\}}^{\frac{2\A-1}{2}}}{{\lambda}^{4\A}t^{2\A}}\\
		&=\frac{\A \,\mathrm{sgn}(\lambda(t-1))}{2^{2\A-2}}\frac{\Big{\{} \left[  \lambda^2(1+t)^2-s^2\right]\left[\lambda^2(1-t)^2 - s^2\right]\Big{\}}^{\frac{2\A-1}{2}}}{{\lambda}^{4\A}t^{2\A}}.
	\end{align*}
	For $0<t<1$, since $\log t<0$, we have
	\begin{align*}
	I_1&:=\int\limits_{C} j_{\A}(\lambda\sqrt{1+t^2-2t\cosh \zeta}) \sinh^{2\A}\zeta \D \zeta=\int\limits_{\log t}^{0} j_{\A}(\lambda\sqrt{1+t^2-2t\cosh \zeta}) \sinh^{2\A}\zeta \D \zeta. 
	\end{align*}
	Substituting $s=\sqrt{1+t^2-2t\cosh \zeta}$, we get,
	\begin{align*}
		I_1&=-\int\limits_{0}^{1-t} \frac{s j_{\A}(\lambda s)\Big{\{} \left[(1+t)^2-s^2\right]\left[(1-t)^2 - s^2\right]\Big{\}}^{\A-\frac{1}{2}}}{2^{2\A-1}t^{2\A}} \D s.
	\end{align*}
	Similarly, for $1<t<2$, since $\log t>0$, we have
	\begin{align*}
		I_2:=& \int\limits_{C} j_{\A}(\lambda\sqrt{1+t^2-2t\cosh \zeta}) \sinh^{2\A}\zeta \D \zeta
		=\int\limits_{0}^{\log t} j_{\A}(\lambda\sqrt{1+t^2-2t\cosh \zeta}) \sinh^{2\A}\zeta \D \zeta\\
		=&\int\limits_{0}^{t-1} \frac{s j_{\A}(\lambda s)\Big{\{} \left[(1+t)^2-s^2\right]\left[(1-t)^2 - s^2\right]\Big{\}}^{\A-\frac{1}{2}}}{2^{2\A-1}t^{2\A}} \D s.
	\end{align*}
	Using these, we have,
	\begin{align*}
		I&= \int\limits_{0}^{2} th(t) \{j_{\A}(\lambda t)y_{\A}(\lambda) - j_{\A}(\lambda)y_{\A}(\lambda t)\} \D t\\
		&=C_1\int\limits_{0}^{2} th(t) \mbox{sgn}(t-1) \int\limits_{0}^{|1-t|} \frac{s j_{\A}(\lambda s)}{t^{2\A}} \Big{\{} \left[(1+t)^2-s^2\right]\left[(1-t)^2 - s^2\right]\Big{\}}^{\A-\frac{1}{2}} \D s \D t\\
		&-C_2\int\limits_{0}^{2} th(t)\sum\limits_{j=0}^{\A-1} a_{j}\,\mathrm{sgn}(\lambda(t-1)) D_{s}^{j}\Bigg{\{}\frac{\Big{\{} \left[\lambda^2(1+t)^2-s^2\right]\left[\lambda^2(1-t)^2 - s^2\right]\Big{\}}^{\A-\frac{1}{2}}}{{\lambda}^{4\A}t^{2\A}}\Bigg{\}}_{|_{s=0}} \D t\\
		&=-C_1\int\limits_{0}^{1} th(t) \int\limits_{0}^{1-t} \frac{s j_{\A}(\lambda s)}{t^{2\A}} \Big{\{} \left[(1+t)^2-s^2\right]\left[(1-t)^2 - s^2\right]\Big{\}}^{\A-\frac{1}{2}} \D s \D t\\
		&+C_1\int\limits_{1}^{2} th(t) \int\limits_{0}^{t-1} \frac{s j_{\A}(\lambda s)}{t^{2\A}} \Big{\{} \left[(1+t)^2-s^2\right]\left[(1-t)^2 - s^2\right]\Big{\}}^{\A-\frac{1}{2}} \D s \D t\\
		&+C_2\int\limits_{0}^{1} th(t)\sum\limits_{j=0}^{\A-1} a_{j} D_{s}^{j}\Bigg{\{}\frac{\Big{\{} \left[\lambda^2(1+t)^2-s^2\right]\left[\lambda^2(1-t)^2 - s^2\right]\Big{\}}^{\A-\frac{1}{2}}}{{\lambda}^{4\A}t^{2\A}}\Bigg{\}}_{|_{s=0}} \D t\\
		&-C_2\int\limits_{1}^{2} th(t)\sum\limits_{j=0}^{\A-1} a_{j} D_{s}^{j}\Bigg{\{}\frac{\Big{\{} \left[\lambda^2(1+t)^2-s^2\right]\left[\lambda^2(1-t)^2 - s^2\right]\Big{\}}^{\A-\frac{1}{2}}}{{\lambda}^{4\A}t^{2\A}}\Bigg{\}}_{|_{s=0}} \D t,
	\end{align*}
	where $C_1$ and $C_2$ are some non-zero dimensional constants whose exact values are irrelevant to us.\\
	Note that the last two integrals cancel each other out. This can be seen by taking $D_s|_{s=0}$ derivatives up to order $\A-1$ of the left and right-hand sides in the equation \eqref{RC}. One more thing to note is that the presence of $\lambda$ in the last two integrals does not make any difference as when we take the $D_s^j|_{s=0}$ derivative, $\lambda^{-2(j+1)}$ comes out of both the integrals. This can be observed from \eqref{D_s derivative}.
	
    Next, changing the order of integration in the first two integrals, we get,
	\begin{align*}
		I&= -C_1\int\limits_{0}^{1} s j_{\A}(\lambda s) \int\limits_{0}^{1-s} \frac{th(t)}{t^{2\A}} \Big{\{} \left[(1+t)^2-s^2\right]\left[(1-t)^2 - s^2\right]\Big{\}}^{\A-\frac{1}{2}} \D t \D s\\
		&+C_1\int\limits_{0}^{1} s j_{\A}(\lambda s) \int\limits_{1+s}^{2} \frac{th(t)}{t^{2\A}} \Big{\{} \left[(1+t)^2-s^2\right]\left[(1-t)^2 - s^2\right]\Big{\}}^{\A-\frac{1}{2}} \D t \D s.
	\end{align*}
	
	Since the inner integrals are same for each $0<s<1$ from \eqref{RC}, we have $I=0$. This completes the proof of the theorem.
	\end{proof}
    \subsection{Proof of sufficiency in Theorem \ref{range}}
    The sufficiency part of Theorem \ref{range} follows immediately as a consequence of the cross product-type identity stated in the previous subsection.
\begin{proof}[Proof of sufficiency in Theorem \ref{range}]
Assuming \eqref{RC}, Theorem \ref{CP-Identity} shows that  
	\[
	y_{\A}(\lambda)\int\limits_0^{2}th(t) j_{\A}(\lambda t) \D t =j_{\A}(\lambda)\int\limits_0^{2}th(t) y_{\A}(\lambda t) \D t. 
	\]
	
	This gives that \eqref{RC} is sufficient by the result from \cite{Agranovsky-Finch-Kuchment-range}; see Theorem \ref{T2.4}, using the fact that Bessel functions of the first and the second kind do not have common zeros.
\end{proof} 
    \subsection{Proof of Theorem \ref{Thm1.4}}
    In this section, we prove Theorem \ref{Thm1.4}, generalizing the range characterization results proved for the radial case to general functions. The proof is very similar to what we did in our earlier work for the odd-dimensional case. We repeat the arguments here for the sake of completeness. 
    
From the calculations done in \cite{Salman_Article}, we have the following: 
\begin{align}
	\notag g_{m,l}(t)& = \frac{\o_{n-1}}{4^{\frac{n-3}{2}} t^{n-2} \o_{n} C_{m}^{\frac{n-2}{2}}(1)}\int\limits_{|1-t|}^{1} u f_{m,l}(u)C_{m}^{\frac{n-2}{2}}\lb \frac{1+u^2-t^2}{2u}\rb\Big{\{}\lb (1+t)^2-u^2\rb\lb u^2-(1-t)^2\rb\Big{\}}^{\frac{n-3}{2}} \D u\\
	\label{Eq49.2} &=\frac{\o_{n-1}}{ t^{n-2} \o_{n}C_{m}^{\frac{n-2}{2}}(1)}\int\limits_{|1-t|}^{1} u^{n-2} f_{m,l}(u)C_{m}^{\frac{n-2}{2}}\lb \frac{1+u^2-t^2}{2u}\rb\Bigg{\{}1- \frac{\lb 1+u^2-t^2\rb^2}{4u^2}\Bigg{\}}^{\frac{n-3}{2}} \D u.
\end{align}

We use the following formula for Gegenbauer polynomials: 
\[
C_m^{\A}(x)=K (1-x^2)^{-\A+\frac{1}{2}}\frac{\D^{m}}{\D x^{m}}\lb 1-x^{2}\rb^{m+\A-\frac{1}{2}},
\]
where 
\[
K=\frac{(-1)^{m} \Gamma(\A + \frac{1}{2})\Gamma(m+2\A)}{2^m m!\Gamma(2\A)\Gamma(m+\A+\frac{1}{2})}.
\]
We have 
\Beq\label{Eq49.3}
C_{m}^{\frac{n-2}{2}}\lb \frac{1+u^2-t^2}{2u}\rb=K\lb 1-\lb \frac{1+u^2-t^2}{2u}\rb^2\rb^{-\frac{n-3}{2}}(-u)^{m} D^{m} \lb 1-\frac{\lb 1+u^2-t^2\rb^2}{4u^2}\rb^{m+\frac{n-3}{2}},
\Eeq
where $D=\frac{1}{t}\frac{\D}{ \D t }$. This follows by a repeated application of chain rule.

Substituting \eqref{Eq49.3} into \eqref{Eq49.2}, we get, 
\begin{align*}
	t^{n-2} g_{m,l}(t)=\frac{K(-1)^{m}\o_{n-1}}{\o_n C_{m}^{\frac{n-2}{2}}(1)}\int\limits_{|1-t|}^{1} u^{m+n-2}f_{m,l}(u)D^{m}\lb 1-\frac{\lb 1+u^2-t^2\rb^2}{4u^2}\rb^{m+\frac{n-3}{2}} \D u.
\end{align*}
Noting that $\A=\frac{n-2}{2}$ and that $D^{m}$ can be taken outside the integral, we get, 
\begin{align*}
	t^{n-2} g_{m,l}(t)&=\frac{K(-1)^{m}\o_{n-1}}{4^{m+\A-\frac{1}{2}}\o_n C_{m}^{\A}(1)}D^{m}\int\limits_{|1-t|}^{1} u^{1-m}f_{m,l}(u)\lb 4u^2-\lb 1+u^2-t^2\rb^2\rb^{m+\A -\frac{1}{2}} \D u\\
	&=\frac{K(-1)^{m}\o_{n-1}}{4^{m+\A-\frac{1}{2}}\o_n C_{m}^{\A}(1)}D^{m}\int\limits_{|1-t|}^{1} u^{1-m}f_{m,l}(u)\Big{\{}\lb (1+t)^2-u^2\rb\lb u^2-(1-t)^2\rb\Big{\}}^{m+\A-\frac{1}{2}} \D u.
\end{align*}
We denote 
\begin{align*}
	& h_{m,l}(t) = t^{n-2}g_{m,l}(t)\\
	&\phi_{m,l}(t)=\int\limits_{|1-t|}^{1} u^{1-m}f_{m,l}(u)\Big{\{}\lb (1+t)^2-u^2\rb\lb u^2-(1-t)^2\rb\Big{\}}^{m+\A-\frac{1}{2}} \D u.
\end{align*}
Then we have 
\[
h_{m,l}(t)=\frac{K(-1)^{m}\o_{n-1}}{4^{m+\A-\frac{1}{2}}\o_n C_{m}^{\A}(1)}D^{m}\phi_{m,l}(t).
\]
We make the following observations: 
\begin{itemize}
	\item $\phi_{m,l}(t)\in C_c^{\infty}((0,2))$,
	\item $\phi_{m,l}(t)$ satisfies the following: For $0\leq s\leq 1$,
	\Beq \label{general_range}
	\int\limits_{0}^{1-s} \frac{t\phi_{m,l}(t)}{t^{2(m+\A)}} \Big{\{}(1+t)^2-s^2)((1-t)^2 - s^2)\Big{\}}^{m+\A- \frac{1}{2}} \D t = \int\limits_{1+s}^{2} \frac{t\phi_{m,l}(t)}{t^{2(m+\A)}} \Big{\{}(1+t)^2-s^2)((1-t)^2 - s^2)\Big{\}}^{m+\A- \frac{1}{2}} \D t.
	\Eeq    
\end{itemize}
The smoothness in the first point follows from the fact that $g_{m,l}(t)$ is a smooth function and $\phi_{m,l}(t)$ is the solution of a linear ODE (the ODE being $D^m$) with smooth coefficients and with zero initial conditions. The fact that the support is strictly in $(0,2)$ is due to the fact that $f_{m,l}\in C^{\infty}([0,1))$ has support strictly away from $1$.  The second point follows from the proof of the necessity given for the radial case; see Theorem \ref{range} by replacing $\A$ by $m+\A$. Hence we have the following necessary condition: There is a function $\phi_{m,l}\in C_c^{\infty}((0,2))$, such that $h_{m,l}(t)= D^{m}\phi_{m,l}(t)$ and $\phi_{m,l}(t)$ satisfies \eqref{general_range}.
We note that for each $0\leq l\leq d_m$, $\phi_{m,l}$ satisfies the same integral equation.

Next we show that this condition is also sufficient. Since $\phi_{m,l}(t)\in C_c^{\infty}((0,2))$ and $\phi_{m,l}(t)$ satisfies \eqref{general_range}, we have by the sufficiency part of the range characterization for radial functions that 
\begin{align*}
	\lb \int\limits_0^{\infty} j_{\A+m}(\lambda t) t\phi_{m,l}(t)\D t\rb y_{\A+m}(\lambda)   =\lb \int\limits_0^{\infty} y_{\A+m}(\lambda t) t\phi_{m,l}(t) \D t\rb j_{\A+m}(\lambda).
\end{align*}
Therefore, using the fact that $j$ and $y$ both satisfy \eqref{der}, we have 
\begin{align*}
	\lb \int\limits_0^{\infty} D^{m}j_{\A}(\lambda t) t\phi_{m,l}(t)\D t\rb y_{\A+m}(\lambda)   =\lb \int\limits_0^{\infty} D^{m}y_{\A}(\lambda t) t\phi_{m,l}(t) \D t\rb j_{\A+m}(\lambda).
\end{align*}
Integrating by parts, we get, 
\begin{align*}
	\lb \int\limits_0^{\infty} j_{\A}(\lambda t) t h_{m,l}(t)\D t\rb y_{\A+m}(\lambda)   =\lb \int\limits_0^{\infty} y_{\A}(\lambda t) t h_{m,l}(t) \D t\rb j_{\A+m}(\lambda).
\end{align*}
We have the same expression for each $0\leq l\leq d_m$ and hence the $m^{\mathrm{th}}$ order spherical harmonic term of the Hankel transform of $g$ defined as the orthogonal projection of the Hankel transform of $g$ onto the subspace of spherical harmonics of degree $m$ vanishes at the non-zero zeros of the spherical Bessel function $j_{\A+m}(\lambda)$, satisfying the condition in Theorem~\ref{T2.4}. We are done with the general case as well.

\section*{Acknowledgments}
DA was supported by SERB's Overseas Visiting Doctoral Fellowship.

VPK would like to thank the Isaac Newton Institute for Mathematical Sciences, Cambridge, UK, for support and hospitality during the workshop, \emph{Rich and Nonlinear Tomography - a multidisciplinary approach} in 2023 where this work was initiated (supported by EPSRC Grant Number EP/R014604/1). Additionally, he acknowledges the support of the Department of Atomic Energy,  Government of India, under
Project No.\ 12-R\&D-TFR-5.01-0520. Finally, he would like to thank Sivaguru Ravisankar and Manmohan Vashisth for several interesting discussions regarding elliptic integrals.  

\bibliographystyle{plain}
\bibliography{references.bib}
\end{document}